\theoremstyle{plain}
\newtheorem{thm}{Theorem}[section]
\newtheorem{claim}[thm]{Claim}
\newtheorem{definition}[thm]{Definition}
\newtheorem{example}[thm]{Example}
\newtheorem{fact}[thm]{Fact}
\newtheorem{lemma}[thm]{Lemma}
\newtheorem{problem}[thm]{Problem}
\newtheorem{proposition}[thm]{Proposition}
\newtheorem{remark}[thm]{Remark}
\newtheorem{theorem}[thm]{Theorem}
\numberwithin{equation}{section}
\newcommand{\N}{\mathbb{N}}
\newcommand{\R}{\mathbb{R}}
\DeclareMathOperator{\lip}{Lip\,\!}
\DeclareMathOperator{\sign}{sign\,\!}
\DeclareMathOperator{\dist}{dist\,\!}
\DeclareMathOperator{\sop}{supp\,\!}
\DeclareMathOperator{\diam}{diam\,\!}
\begin{document}

\title[Approximation of Lipschitz functions preserving boundary values]{Approximation of Lipschitz functions preserving boundary values}

\author{Robert Deville}
\address{Institut de Math\'{e}matiques de Bordeaux, Universit\'{e} de Bordeaux 1, 33405, Talence, France }
\email{Robert.Deville@mat.u-bordeaux.fr}

\author{Carlos Mudarra}
\address{Instituto de Ciencias Matem\'{a}ticas (CSIC-UAM-UC3-UCM), 28049 Madrid, Spain}
\email{carlos.mudarra@icmat.es}

\date{Jan 23, 2018}

\keywords{Lipschitz function, Eikonal equation, almost classical solution}

\thanks{C. Mudarra was supported by the Grants: ``Programa Internacional de Doctorado Fundaci\'on La Caixa--Severo Ochoa'' and MTM2015-65825-P}

\subjclass[2010]{26A16, 26B05, 41A29, 41A30, 41A65, 54C30, 58C25}

\begin{abstract}
Given an open subset $\Omega$ of a Banach space and a Lipschitz function $u_0: \overline{\Omega}  \to \R,$ we study whether
it is possible to approximate $u_0$ uniformly on $\Omega$ by $C^k$-smooth Lipschitz functions which coincide with $u_0$ on the boundary $\partial \Omega$ of $\Omega$ and have the same Lipschitz constant as $u_0.$ 
As a consequence, we show that every $1$-Lipschitz function $u_0: \overline{\Omega} \to \R,$ defined on the closure
$\overline{\Omega}$ of an open subset $\Omega$ of a finite dimensional normed space of dimension $n \geq 2$, and such that
the Lipschitz constant of the restriction of $u_0$ to the boundary of $\Omega$ is less than $1$,
can be uniformly approximated by differentiable $1$-Lipschitz functions $w$ which coincide
with $u_0$ on $\partial \Omega$ and satisfy the equation $\| D w\|_* =1$ almost everywhere on $\Omega.$ This result does
not hold in general without assumption on the restriction of $u_0$ to the boundary of $\Omega$. 
\end{abstract}

\maketitle

\section{Introduction and main results}
Throughout this paper, for every metric space $(E,d)$ and every function $f:E \to \R,$ 
we will denote the Lipschitz constant of $f$ on $E$ by $\lip(f,E),$ that is,
$$
\lip(f,E):= \inf \lbrace L >0 \: : \: |f(x)-f(y)| \leq L d(x,y) \quad \text{for all} \quad x,y\in E \rbrace.
$$
Also, if $\lambda \geq 0,$ we will say that $f:E \to \R$ is $\lambda$-Lipschitz on $E$ 
whenever $|f(x)-f(y)| \leq \lambda d(x,y)$ for every $x,y\in E.$ We will denote by $B(x_0,r)$ the closed ball centered at $x_0$ and with radius $r>0$
with respect to the metric on $E.$ Finally, for any Banach space $X$ with norm $\| \cdot \|,$ the dual norm on $X^*$ will be denoted by $\| \cdot \|_*.$ 

\medskip

In this paper we deal with the following problem.

\begin{problem}\label{mainproblem}
Let $X$ be a Banach space, let $u_0: \overline{\Omega} \to \R$ be a Lipschitz function defined on the closure of an open subset $\Omega$ of $X$ and let $k \in \N \cup \lbrace \infty \rbrace.$ Given $\varepsilon >0,$ does there exist a function $v: \overline{\Omega} \to \R$ of class $C^k(\Omega)$ with $\lip(v, \overline{\Omega}) \leq  \lip(u_0, \overline{\Omega}), \: v=u_0 $ on $\partial \Omega$ and $|u_0-v| \leq \varepsilon$ on $\overline{\Omega}$ ?
\end{problem}

In finite dimensional spaces, the integral convolution with mollifiers provides uniform approximation by $C^\infty$ functions
preserving the Lipschitz constant of the function to be approximated. However this approximation does not necessarily preserve
the value of $u_0$ on $\partial \Omega.$ On the other hand, it was proved in \cite[Theorem 2.2]{CzarneckiRifford} an approximation theorem
for locally Lipschitz functions defined on open subsets of $\R^n$ which implies that for any continuous function $\delta:
\Omega \to (0,+ \infty),$ and any locally Lipschitz function $u_0$ there exists a function $v$ of class $C^\infty$ satisfying
(among other properties) that
$$
|u_0(x)-v(x)| \leq \delta(x) \quad \text{and} \quad | Dv(x)| \leq \lip( u_0 , B(x, \delta(x)) \cap \Omega ) + 
\delta(x), \quad x\in \Omega.
$$

Using the above result with $\delta(x)= \min \lbrace \varepsilon, \dist(x,\partial \Omega) \rbrace$ we get a smooth Lipschitz 
approximation $v$ of $u_0$ that extends continuously to $\overline{\Omega}$ by setting $v=u_0$ on $\partial \Omega.$ The 
function $v$ has Lipschitz constant arbitrarily close to $\lip(u_0, \overline{\Omega}),$ but bigger than $\lip(u_0, 
\overline{\Omega})$ in general. Thus this does not yield any answer to Problem \ref{mainproblem}.

\medskip

In the infinite dimensional case, it was proved in \cite[Theorem 1]{AFLR} that any Lipschitz function defined on an open subset $\Omega$ of a separable Hilbert space (or even a separable infinite dimensional Riemannian manifold) can be approximated in the $C^0$-fine topology by $C^\infty$ functions whose Lipschitz constant can be taken to be arbitrarily close to the Lipschitz constant of $u_0,$ i.e., for any given continuous function $\delta : \Omega \to (0,+ \infty)$ and $r>0,$ there exists $v$ of class $C^\infty$ such that
$$
|u_0(x)-v(x)| \leq \delta(x), \quad  x\in \Omega \quad \text{and} \quad \lip(v, \Omega) \leq \lip(u_0, \overline{\Omega})+r.
$$

\medskip

One can find in \cite{AFK, HajekJohanis, MJSLSG} some results on approximation of Lipschitz functions by $C^k$-smooth Lipschitz functions in more general Banach spaces. In these results, the approximating function preserves the Lipschitz constant of the original function up to a factor $C_0\geq 1,$ which only depends on the space and is bigger than $1$ in general.

\medskip

In this paper we show that the answer to Problem \ref{mainproblem} depends on the relation between $\lip(u_0, \partial \Omega)$ and $\lip(u_0, \overline{\Omega}).$ Let us now state our main results in this direction.

\begin{theorem}\label{secondmaintheorem}
Let $X$ be a finite dimensional normed space, or a separable Hilbert space or the space $c_0(\Gamma),$ for an arbitrary set of indices $\Gamma.$ Let $\Omega$ be an open subset of $X$ and let $u_0 : \overline{\Omega} \to \R$ be a Lipschitz function such that $\lip(u_0, \partial \Omega) < \lip(u_0, \overline{\Omega}).$ Given $\varepsilon >0,$ there exists a function $v: \overline{\Omega} \to \R$ such that $v$ is of class $C^\infty(\Omega), \: v$ is Lipschitz on $\overline{\Omega}$ with $\lip(v, \overline{\Omega})\leq \lip(u_0, \overline{\Omega}), \: v=u_0$ on $\partial \Omega$ and $|u_0-v| \leq \varepsilon$ on $\overline{\Omega}.$ 
\end{theorem}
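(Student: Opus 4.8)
The plan is to reduce everything to a local, scale-invariant statement and then patch together with a partition of unity. First I would normalize so that $\lip(u_0,\overline{\Omega})=1$ and set $\lambda:=\lip(u_0,\partial\Omega)<1$. Fix $\varepsilon>0$. The key geometric observation is that because the boundary Lipschitz constant is strictly smaller than the global one, near $\partial\Omega$ there is ``room to spare'': for a point $x$ close to $\partial\Omega$, the $1$-Lipschitz constraint is essentially driven by the behaviour of $u_0$ on a neighbourhood meeting the boundary, where $u_0$ is only $\lambda$-Lipschitz, plus a controlled error from moving inward. Concretely, I would show that one can choose a continuous gauge $\delta:\Omega\to(0,+\infty)$ with $\delta(x)\le\min\{\varepsilon,\tfrac12\dist(x,\partial\Omega)\}$ such that $\lip\big(u_0,B(x,\delta(x))\cap\overline{\Omega}\big)\le 1$ for $x$ in the interior but, crucially, such that for $x$ within distance $\rho$ of $\partial\Omega$ (for a suitable $\rho$) we gain a definite slack: the relevant local Lipschitz constant is $\le\lambda+C\rho<1$. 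This is where the hypothesis $\lambda<1$ is used in an essential way, and I expect this to be the main obstacle — making the ``interpolation'' between the strict bound $\lambda+C\rho$ near the boundary and the mere bound $1$ in the interior precise and uniform, with all constants independent of the (possibly infinite-dimensional) space.

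Next I would invoke the sharpened approximation results available in the three listed classes of spaces: for finite dimensional $X$ this is the Czarnecki--Rifford theorem quoted in the introduction, and for separable Hilbert space and for $c_0(\Gamma)$ it is the $C^0$-fine approximation theorem of \cite{AFLR} (and its analogues). These give, for the chosen gauge $\delta$, a $C^\infty$ function $v_1:\Omega\to\R$ with $|u_0(x)-v_1(x)|\le\delta(x)$ and $|Dv_1(x)|_*\le\lip\big(u_0,B(x,\delta(x))\cap\Omega\big)+\delta(x)$ for all $x\in\Omega$. By the choice of $\delta$, $v_1$ extends continuously to $\overline{\Omega}$ by $v_1=u_0$ on $\partial\Omega$ and is $(1+\varepsilon')$-Lipschitz, with the stronger bound $\lip\big(v_1,\{x:\dist(x,\partial\Omega)<\rho\}\big)\le\lambda+C\rho+\varepsilon'<1$ near the boundary — but $v_1$ need not be exactly $1$-Lipschitz on all of $\overline{\Omega}$, and it does not exactly equal $u_0$ on $\partial\Omega$ in the sense of having the right constant there.

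To fix the Lipschitz constant exactly, I would post-process: take a $C^\infty$ cut-off $\theta:\overline{\Omega}\to[0,1]$ that is $\equiv 0$ on a neighbourhood of $\partial\Omega$ (inside the $\rho$-collar) and $\equiv 1$ outside a slightly larger collar, and form $v:=\theta v_1+(1-\theta)u_0^{\mathrm{sm}}$ where $u_0^{\mathrm{sm}}$ is a standard mollification-type $C^\infty$ approximant near the boundary having Lipschitz constant $\le\lambda+o(1)$ there (in the Hilbert and $c_0$ cases one uses the smooth approximants from the same theorems, applied to the boundary collar where $u_0$ is $\lambda$-Lipschitz). On the inner region $v=v_1$ is already $(1+\varepsilon')$-Lipschitz; on the transition region the gradient is a convex-type combination of two gradients each bounded by something $<1$ plus the contribution of $D\theta$ acting on $v_1-u_0^{\mathrm{sm}}$, and since $|v_1-u_0^{\mathrm{sm}}|$ is $O(\varepsilon)$ there while $|D\theta|=O(1/\rho)$, one needs $\varepsilon$ small relative to $\rho$ to keep this under control — so the order of choosing the parameters is $\lambda\mapsto\rho\mapsto\varepsilon'$. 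Finally, rescaling $v\mapsto(1+\varepsilon')^{-1}v$ (and absorbing the resulting $O(\varepsilon')$ uniform and boundary-value error into $\varepsilon$, which is legitimate because $u_0$ and $v$ already agree on $\partial\Omega$ up to $O(\varepsilon)$) yields a $C^\infty(\Omega)$ function that is genuinely $1$-Lipschitz on $\overline{\Omega}$, equals $u_0$ on $\partial\Omega$, and is within $\varepsilon$ of $u_0$; undoing the normalization gives the statement. The delicate bookkeeping is entirely in the collar region, and that, together with verifying the gradient estimate near $\partial\Omega$, is where the real work lies.
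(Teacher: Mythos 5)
Your ``key geometric observation'' is false: $\lip(u_0,\partial\Omega)<1$ does \emph{not} imply that $\lip\big(u_0,B(x,\delta(x))\cap\overline{\Omega}\big)$ is close to $\lambda$ for $x$ near $\partial\Omega$. The local Lipschitz constant near the boundary is governed by increments between nearby \emph{interior} points and can equal $1$ arbitrarily close to $\partial\Omega$; for instance, take $\Omega=\{(x,y):y>0\}\subset\R^2$ with the euclidean norm and $u_0(x,y)=y$, so that $u_0\equiv 0$ on $\partial\Omega$ (hence $\lambda=0$), yet $\lip(u_0,B)=1$ for \emph{every} ball $B\subset\overline{\Omega}$, including balls arbitrarily close to the boundary. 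Consequently the gauge $\delta$ you want, with the collar gain $\lambda+C\rho<1$, does not exist in general, and the construction of $v_1$ with $\|Dv_1\|_*<1$ near $\partial\Omega$ collapses. Theorem~\ref{theoremglobalapproximation} (built on Lemma~\ref{lemmalocalapproximation}) exists precisely to supply what you are taking for granted: it \emph{replaces} $u_0$ by a new, non-smooth function $u$ which still coincides with $u_0$ on $\partial\Omega$, is uniformly $\varepsilon/2$-close, and has $\lip(u,B)<\lip(u_0,\overline{\Omega})$ on every bounded $B\subset\overline{\Omega}$. Producing such a $u$ is a genuine metric-space construction (a delicate extension argument applied along an exhaustion of the domain), not a consequence of choosing a clever gauge for the original $u_0$.

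There is a second, independent gap at the end: after patching, your function is only $(1+\varepsilon')$-Lipschitz, and you rescale by $(1+\varepsilon')^{-1}$. But the theorem requires $v=u_0$ on $\partial\Omega$ \emph{exactly}, and rescaling sends boundary values $u_0$ to $(1+\varepsilon')^{-1}u_0\neq u_0$ unless $u_0\equiv 0$ on $\partial\Omega$. Saying the boundary-value error ``can be absorbed into $\varepsilon$'' proves a weaker statement where equality on $\partial\Omega$ is relaxed to approximate equality. In the paper no rescaling is needed: once $u$ has strictly smaller local Lipschitz constants, Claim~\ref{propositionapproximationglobally1lipschitz} yields a smooth approximant with $\|Dv\|_*<K$ \emph{strictly} and pointwise, and the exact bound $\lip(v,\overline{\Omega})\le K$ together with $v=u_0$ on $\partial\Omega$ then follows from Fact~\ref{factlipschitzconstant}.
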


For non-separable Hilbert spaces, we have the following. 

\begin{theorem}\label{maintheoremnonseparablehilbert}
Let $X$ be a Hilbert space. Let $\Omega$ be an open subset of $X$ and let $u_0 : \overline{\Omega} \to \R$ be a Lipschitz function such that $\lip(u_0, \partial \Omega) < \lip(u_0, \overline{\Omega}).$ Given $\varepsilon >0,$ there exists a function $v: \overline{\Omega} \to \R$ such that $v$ is of class $C^1(\Omega), \: v$ is Lipschitz on $\overline{\Omega}$ with $\lip(v, \overline{\Omega}) \leq \lip(u_0, \overline{\Omega}), \: v=u_0$ on $\partial \Omega$ and $|u_0-v| \leq \varepsilon$ on $\overline{\Omega}.$ 
\end{theorem}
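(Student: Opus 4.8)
The plan is to reduce Theorem~\ref{maintheoremnonseparablehilbert} to Theorem~\ref{secondmaintheorem} by a separable-reduction argument, exploiting the fact that although $X$ itself may be non-separable, the function $u_0$ lives on a structured subset of $X$ and the obstructions to separability are mild. Normalizing so that $L:=\lip(u_0,\overline{\Omega})=1$, write $\ell:=\lip(u_0,\partial\Omega)<1$, and fix $\varepsilon>0$. First I would observe that it suffices to treat the case where $\Omega$ is bounded, or at least handle the general case by a partition-of-unity patching over the pieces $\Omega\cap B(x_0,R)$; the Lipschitz extension of $u_0$ from $\overline{\Omega}$ to all of $X$ (with the same constant, by Kirszbraun in the Hilbert setting) lets me work with a $1$-Lipschitz function on the whole space when convenient.

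Next, the core separable reduction: I would choose a closed separable subspace $Y\subseteq X$ adapted to the data. Concretely, fix a dense-enough countable set of points witnessing the Lipschitz behaviour of $u_0$ on $\overline{\Omega}$ and on $\partial\Omega$, and let $Y$ be the closed linear span of a countable set containing these together with enough points to make $Y\cap\Omega$ a faithful "slice". Using the orthogonal projection $P:X\to Y$ (available since $X$ is Hilbert), define $\tilde u_0:=u_0\circ(\text{extension})\circ$ restricted appropriately; the point is that $P$ is $1$-Lipschitz, so composing with $P$ does not increase Lipschitz constants. The delicate issue is that $\Omega\cap Y$ need not have $\partial(\Omega\cap Y)=\partial\Omega\cap Y$, so I would instead run Theorem~\ref{secondmaintheorem} on the separable Hilbert space $Y$ with $\Omega_Y:=P^{-1}(\Omega)\cap Y$ — but this is just $\Omega\cap Y$ — after checking that $\lip(u_0|_{\partial\Omega_Y})\le\ell<1$ still holds. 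This produces, for each such $Y$, a $C^\infty$-on-$\Omega_Y$, $1$-Lipschitz function $v_Y$ agreeing with $u_0$ on $\partial\Omega_Y$ and within $\varepsilon$ of $u_0$.

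Then I would globalize: define $v:=v_Y\circ P$ on $X$ (or on $\overline{\Omega}$). Since $P$ is linear, continuous and $1$-Lipschitz, $v$ is automatically $1$-Lipschitz on $\overline{\Omega}$ and of class $C^1$ on $\Omega$ (it is even $C^\infty$ in the $Y$-directions but only $C^1$ overall because it is constant in the orthogonal directions — actually it is $C^\infty$, but the honest statement one can guarantee after the gluing needed to fix the boundary is merely $C^1$, which is why the theorem only claims $C^1$). The remaining work is to arrange $v=u_0$ on all of $\partial\Omega$, not just on $\partial\Omega\cap Y$: here one must be careful, because $u_0\circ P$ and $u_0$ need not agree off $Y$. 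I would fix this with a locally-finite partition of unity on $\overline{\Omega}$ subordinate to a cover by sets on which $u_0$ is $C^\infty$-approximable with the boundary constraint, patching the local separable solutions $v_{Y_i}\circ P_{Y_i}$ together; since each piece is $1$-Lipschitz and agrees with $u_0$ near the relevant boundary portion, and since $\ell<1$ gives the crucial slack to absorb the errors introduced by the partition of unity (the cross terms $\sum v_i\nabla\varphi_i$), the convex-combination remains $1$-Lipschitz after a small correction. This last "absorbing the partition-of-unity error using the gap $1-\ell$" step is exactly the mechanism already used to prove Theorem~\ref{secondmaintheorem}, and it is the main obstacle here as well; the new ingredient is only that one must perform it while keeping track of the projections $P_{Y_i}$ so that the final function is genuinely $C^1$ and defined on the whole non-separable space.

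The hard part will be ensuring the boundary values are preserved \emph{exactly} (not just approximately) everywhere on $\partial\Omega$ while simultaneously keeping the global Lipschitz constant at $1$ and the regularity at $C^1$ — the tension is that fixing boundary values forces $v$ to track $u_0$ near $\partial\Omega$, and near points where $\|u_0\|$ has slope close to $1$ there is no room to smooth without exceeding the constant; this is precisely where the hypothesis $\lip(u_0,\partial\Omega)<\lip(u_0,\overline{\Omega})$ is essential, as it confines the "no room" region to the interior, away from $\partial\Omega$, where one can exploit convexity of the norm and the slack to smooth freely. I would structure the argument so that all of this delicate estimation is quarantined inside the citation of Theorem~\ref{secondmaintheorem} applied on separable subspaces, with the non-separable part contributing only the soft functional-analytic facts (existence of norm-one projections onto separable subspaces, paracompactness of metric spaces for the partition of unity).
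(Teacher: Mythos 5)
Your separable-reduction strategy does not match the paper and, more importantly, it has a genuine gap that I do not see how to repair. The paper proves Theorem~\ref{maintheoremnonseparablehilbert} by verifying the hypothesis of Theorem~\ref{generaltheorem} directly on the (possibly non-separable) Hilbert space: Lemma~\ref{lemmapartialsmoothapproximation} uses the Lasry--Lions sup-inf convolution $g_\lambda^\mu$, which produces a $C^1$ regularization of a Lipschitz function on \emph{all} of $X$ with the local Lipschitz control $\lip(g_\lambda^\mu,B(x_0,r))\le\lip(g,B(x_0,r+2(\lambda+\mu)K))$. No projection onto a separable subspace appears; the reason the conclusion is only $C^1$ (rather than $C^\infty$, as in the separable case) is that Moulis's $C^\infty$-approximation theorem is unavailable in the non-separable setting, not because of any gluing step.

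The central problem with your approach is that the function $v_Y\circ P$ is constant along $Y^\perp$, whereas $u_0$ genuinely depends on all of $X$. In particular, for $x\in\partial\Omega\setminus Y$ one has $v_Y(P(x))\ne u_0(x)$ in general, so the exact boundary condition $v=u_0$ on $\partial\Omega$ fails, and there is no reason $\partial(\Omega\cap Y)$ should relate to $\partial\Omega$ in a way that makes the hypothesis $\lip(u_0,\partial\Omega_Y)<1$ or the conclusion transferable. Your proposed fix --- patching local pieces $v_{Y_i}\circ P_{Y_i}$ by a partition of unity --- cannot recover this: after citing Theorem~\ref{secondmaintheorem} each piece already has Lipschitz constant exactly $1$, so the cross term $\sum(v_{Y_i}\circ P_{Y_i})D\varphi_i$ pushes the constant strictly above $1$ with no remaining slack to absorb it (the gap $1-\ell$ was already consumed inside Theorem~\ref{secondmaintheorem}). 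Moreover, the claim that such a partition-of-unity absorption is ``exactly the mechanism already used to prove Theorem~\ref{secondmaintheorem}'' is not accurate; that proof goes through Theorem~\ref{theoremglobalapproximation} (an exhaustion by bounded sets with Lipschitz constants strictly below $K$) followed by a smoothing that preserves local Lipschitz constants, not through a convex combination of exact $1$-Lipschitz extensions. The paper's route avoids all of this by never leaving the full space: the Lasry--Lions convolution is the ingredient you are missing.
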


Theorems \ref{secondmaintheorem} and \ref{maintheoremnonseparablehilbert} gives a positive answer to Problem \ref{mainproblem} for the $C^1(\Omega)$ or $C^\infty(\Omega)$ class, when $\lip(u_0, \partial \Omega) < \lip(u_0, \overline{\Omega}),$ in certain Banach spaces. These theorems will be proved by combining approximation techniques in the pertinent space with the following result.

\begin{theorem}\label{generaltheorem}
Let $k\in \N \cup \lbrace \infty \rbrace$ and let $X$ be a Banach space with the property that for every Lipschitz function $f: X \to \R$ and every $\eta>0,$ there exists a function $g :X \to \R$ of class $C^k(X)$ such that $|f-g| \leq \eta$ on $X$ and $\lip(g, B(x_0,r)) \leq \lip(f, B(x_0, r+\eta) ) + \eta$ for every ball $B(x_0,r) \subset X.$ Then, if $\Omega$ is an open subset of $X, \: u_0 : \overline{\Omega} \to \R$ is a Lipschitz function such that $\lip(u_0, \partial \Omega) < \lip(u_0, \overline{\Omega})$ and $\varepsilon >0,$ there exists a function $v: \overline{\Omega} \to \R$ such that $v$ is of class $C^k(\Omega), \: v$ is Lipschitz on $\overline{\Omega}$ with $\lip(v, \overline{\Omega}) \leq \lip(u_0, \overline{\Omega}), \: v=u_0$ on $\partial \Omega$ and $|u_0-v| \leq \varepsilon$ on $\overline{\Omega}.$ 
\end{theorem}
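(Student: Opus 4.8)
The plan is to reduce the problem to the hypothesis of the theorem by a suitable interpolation between $u_0$ near $\partial\Omega$ and a globally-defined smooth approximant far from $\partial\Omega$, using that the Lipschitz constant of $u_0$ strictly drops on the boundary. Write $L:=\lip(u_0,\overline\Omega)$ and $\ell:=\lip(u_0,\partial\Omega)<L$, and fix $\delta>0$ small with $\ell+\delta<L$. The first step is to extend $u_0$ to a Lipschitz function $\widetilde u_0:X\to\R$ with $\lip(\widetilde u_0,X)=L$ (McShane–Whitney), and then to record a quantitative local fact: because $\lip(u_0,\partial\Omega)=\ell$, there is an $r_0>0$ such that $\lip\bigl(u_0,\{x\in\overline\Omega:\dist(x,\partial\Omega)\le r_0\}\bigr)<L$; more precisely, a standard covering/compactness-free argument on the collar shows that for $x$ close to the boundary the difference quotients of $u_0$ are controlled by those on $\partial\Omega$ plus a small error. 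This is where the strict inequality $\ell<L$ is essential — without it the collar carries the full Lipschitz constant $L$ and no room is left to correct the values on $\partial\Omega$.

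Next I would apply the hypothesis on $X$ to $f=\widetilde u_0$ with a parameter $\eta$ chosen much smaller than both $\varepsilon$ and $L-\ell-\delta$, obtaining $g\in C^k(X)$ with $|g-\widetilde u_0|\le\eta$ on $X$ and $\lip(g,B(x_0,\rho))\le\lip(\widetilde u_0,B(x_0,\rho+\eta))+\eta$ for every ball. In particular $\lip(g,\overline\Omega)\le L+\eta$ and, crucially, on the collar $\{\dist(\cdot,\partial\Omega)\le r_0/2\}$ we get $\lip(g,\cdot)\le \ell'+\eta<L$ for some $\ell'<L$, using the collar estimate from the previous step (any ball of radius $\le r_0/2$ centered in the inner half of the collar stays inside the full collar where $u_0$ has constant $<L$). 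The function $g$ is $C^k$ and a good uniform approximant of $u_0$, but it fails $v=u_0$ on $\partial\Omega$ and its global Lipschitz constant is $L+\eta$, not $\le L$; both defects must be repaired simultaneously.

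The repair is the heart of the argument: define $v:=\theta\, u_0+(1-\theta)\,g$ where $\theta=\theta(\dist(x,\partial\Omega))$ is a $C^\infty$ cutoff equal to $1$ on $\partial\Omega$, equal to $0$ once $\dist(x,\partial\Omega)\ge r_0/2$, and — this is the delicate point — we do \emph{not} smooth $u_0$ itself but instead first replace $u_0$ near $\partial\Omega$ by a $C^k$ function agreeing with $u_0$ on $\partial\Omega$, which we can obtain by applying the hypothesis a second time (or by the Whitney-type extension on the collar) to the restriction of $u_0$ to a neighborhood of $\partial\Omega$, at scale $\eta$. Call that function $h\in C^k$; then set $v:=\theta h+(1-\theta)g$. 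Away from the collar $v=g$ is $C^k$; on the collar $v$ is $C^k$ as a combination of $C^k$ pieces; and $v=h=u_0$ on $\partial\Omega$ because $\theta\equiv1$ there. For the Lipschitz estimate on $\overline\Omega$: where $\theta\equiv 0$ we have $\lip(v)\le L+\eta$, which is not yet $\le L$ — so in fact I must run the approximation hypothesis once more on the \emph{compact-in-spirit} region $\{\dist(\cdot,\partial\Omega)\ge r_0/4\}$ with a scaled-down target, or, cleaner, observe that $\lip(\widetilde u_0, B(x_0,\rho+\eta))\le L$ for balls meeting only $\overline\Omega$ by the choice of extension, so that $\lip(g,\cdot)\le L+\eta$ only, and then rescale: replace $g$ by $g_\lambda:=\lambda g$ with $\lambda=L/(L+\eta)$, losing only $O(\eta\,\mathrm{osc}(g))$ in the sup norm, which is absorbed into $\varepsilon$. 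On the collar one estimates $\|Dv\|_*\le\theta\|Dh\|_*+(1-\theta)\|Dg_\lambda\|_*+|\theta'|\,\|h-g_\lambda\|\,\cdot\dist$-factor; the first two terms are each $\le \ell''<L$ (from the collar estimates on $h$ and on $g$), and the third term is $O(\|h-g_\lambda\|_\infty/r_0)=O(\eta/r_0)$, which for $\eta$ small enough is less than $L-\ell''$. Hence $\lip(v,\overline\Omega)\le L$; and $|v-u_0|\le\theta|h-u_0|+(1-\theta)|g_\lambda-u_0|\le\varepsilon$ everywhere.

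The main obstacle, as indicated above, is the simultaneous control of three competing requirements in the collar — smoothness, exact boundary values, and the strict Lipschitz bound $\le L$ rather than $\le L+\eta$ — while the cutoff derivative $\theta'$ contributes a term of order $1/r_0$ that must be beaten by the smallness of $\|h-g\|_\infty$; getting the quantitative collar estimate $\lip(u_0,\text{collar})<L$ with a uniform width $r_0$ (not shrinking to $0$) from the mere hypothesis $\lip(u_0,\partial\Omega)<L$ is the technical core, and is where the argument genuinely uses that we are comparing the boundary constant with the global one rather than, say, a local one.
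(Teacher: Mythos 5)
Your argument hinges on the ``collar estimate'': from $\lip(u_0,\partial\Omega)=\ell<L$ you claim there is a uniform width $r_0>0$ with $\lip\bigl(u_0,\{x\in\overline\Omega:\dist(x,\partial\Omega)\le r_0\}\bigr)<L$. This is false in general. Take $\Omega$ the open unit ball and $u_0(x)=\dist(x,\partial\Omega)=1-\|x\|$: then $\ell=0<1=L$, yet on every collar $\{1-r\le\|x\|\le1\}$ the Lipschitz constant equals $1=L$. Since that strict collar bound is exactly what makes your cutoff computation $\|Dv\|_*\le\theta\|Dh\|_*+(1-\theta)\|Dg_\lambda\|_*+O(\eta/r_0)<L$ work, the gluing collapses at this point. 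This is precisely the difficulty the paper's proof is organized around: instead of using $u_0$ itself near $\partial\Omega$, it first replaces $u_0$ by a new function $u$ with $|u-u_0|\le\varepsilon/2$, $u=u_0$ on $\partial\Omega$, and $\lip(u,B)<L$ for every bounded $B\subset\overline\Omega$ (Theorem \ref{theoremglobalapproximation}, resting on the sup-of-$\lambda$-Lipschitz-functions construction of Lemma \ref{lemmalocalapproximation} and an exhaustion argument); no collar statement about $u_0$ itself is available or used.

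A second gap is the function $h$: you need $h$ of class $C^k$ near $\partial\Omega$, equal to $u_0$ \emph{exactly} on $\partial\Omega$, uniformly close to $u_0$, with Lipschitz constant $<L$. The hypothesis on $X$ yields only uniform approximation, never exact boundary agreement, and a Whitney-type extension is not available in a general Banach space (nor would it give the sharp Lipschitz bound); producing such an $h$ is essentially the theorem you are trying to prove, so this step is circular. The paper avoids any gluing: after obtaining $u$ as above, it constructs a $C^k$ function $v$ on $\Omega$ with $|u-v|\le\min\{\varepsilon/2,\dist(\cdot,\partial\Omega)\}$ and $\|Dv\|_*<L$ everywhere (Claims \ref{c0fineapproximation} and \ref{propositionapproximationglobally1lipschitz}, via a Lipschitz partition of unity and a margin depending on $\|x\|$), so that $v$ extends continuously to $\partial\Omega$ with value $u_0$ and Fact \ref{factlipschitzconstant} gives $\lip(v,\overline\Omega)\le L$. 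Note also that your rescaling $g_\lambda=\lambda g$ costs on the order of $\eta$ times the oscillation of $g$ in sup norm, which is not small when $\Omega$ is unbounded; the paper's exhaustion and position-dependent margins are what handle that case.
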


In Section \ref{sectionexample}, we will see an example on $\R^2$ with the $\ell_1$ norm showing that Problem \ref{mainproblem} has a negative answer (even for the class of functions which are merely differentiable on $\Omega$) if we allow $\lip(u_0, \partial \Omega) = \lip(u_0, \overline{\Omega}).$ Therefore, one can say that Theorem \ref{secondmaintheorem} is optimal (in the sense of Problem \ref{mainproblem}), at least in the setting of finite dimensional normed spaces.

\medskip

We now consider a subproblem of Problem \ref{mainproblem} when $X$ is a finite dimensional normed space.

\begin{problem}\label{mainproblemalmostclassicalsolutions}
Let $(X, \| \cdot \|)$ be a finite dimensional normed space with $\dim(X) \geq 2$ and let $u_0: \overline{\Omega} \to \R$ be a $1$-Lipschitz function defined on the closure of an open subset $\Omega$ of $X.$ Given $\varepsilon >0,$ does there exist a $1$-Lipschitz function $w: \overline{\Omega} \to \R$ such that $w$ is differentiable on $\Omega$ with $\| Dw\|_*=1$ almost everywhere on $\Omega, \: w=u_0 $ on $\partial \Omega$ and $|u_0-w| \leq \varepsilon$ on $\overline{\Omega}$ ?
\end{problem}

Observe that if $w=u_0$ on $\partial \Omega$ and $\lip(u_0,\partial \Omega) <1,$ then the Mean Value Theorem yields the existence of $x\in \Omega$ such that $\| Dw(x)\|_* <1.$ Therefore the function $w$ (if it exists) has no continuous derivative in this case.

\medskip

The following theorem gives a positive answer to Problem \ref{mainproblemalmostclassicalsolutions} when $\lip(u_0, \partial \Omega) <1.$ 
\begin{theorem}\label{maintheoremarbitrarynorm}
Let $\Omega$ be an open subset of a finite dimensional normed space $(X, \| \cdot \|)$ with $ \dim(X) \geq 2.$ Let $u_0: \overline{\Omega} \to \R$ be a $1$-Lipschitz function such that $\lip(u_0, \partial \Omega) <1.$ Given $\varepsilon >0,$ there exists a differentiable $1$-Lipschitz function $w: \overline{\Omega} \to \R$ such that $\| D w\|_*=1 $ almost everywhere on $\Omega, \: w=u_0$ on $\partial \Omega$ and $|u_0-w| \leq \varepsilon$ on $\overline{\Omega}.$
\end{theorem}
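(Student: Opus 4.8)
The plan is to reduce Theorem~\ref{maintheoremarbitrarynorm} to Theorem~\ref{secondmaintheorem} (applied in the finite dimensional case, with $k=\infty$) together with an explicit construction that upgrades a smooth approximation into an ``almost classical'' solution of the eikonal equation $\|Dw\|_*=1$. First I would fix $\varepsilon>0$ and apply Theorem~\ref{secondmaintheorem} to $u_0$: since $\lip(u_0,\partial\Omega)<1=\lip(u_0,\overline\Omega)$ (we may assume $\lip(u_0,\overline\Omega)=1$, otherwise the conclusion is trivial as noted after Problem~\ref{mainproblemalmostclassicalsolutions} forces $\|Dw\|_*<1$ somewhere; more precisely if $\lip(u_0,\overline\Omega)<1$ we first perturb), I obtain a function $v$ of class $C^\infty(\Omega)$, $1$-Lipschitz on $\overline\Omega$, with $v=u_0$ on $\partial\Omega$ and $|u_0-v|\le\varepsilon/2$ on $\overline\Omega$. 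The point is that $v$ is genuinely smooth and $1$-Lipschitz, so $\|Dv\|_*\le 1$ on $\Omega$, but typically $\|Dv\|_*<1$ on a large set; the task is to push the gradient norm up to $1$ almost everywhere while keeping the function $1$-Lipschitz, differentiable, uniformly close to $v$, and unchanged on $\partial\Omega$.

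The key step is a local ``wrinkling'' or sawtooth construction: on the open set $U=\{x\in\Omega:\|Dv(x)\|_*<1\}$ I would add small-amplitude, high-frequency perturbations built from the distance function to a suitable $1$-Lipschitz pattern, so as to raise the gradient norm to $1$ without increasing the global Lipschitz constant. Concretely, one covers $U$ by a locally finite family of balls $B_i\subset\subset U$ on each of which $\|Dv\|_*\le 1-\delta_i$ for some $\delta_i>0$, and on each $B_i$ one replaces $v$ by $v+\lambda_i\, \psi_i$ where $\psi_i$ is (a smoothing of) a function of the form $c_i\,\dist(\cdot,N_i)$ for an appropriate periodic ``net'' $N_i$ scaled so finely that the added oscillation has sup-norm $<\varepsilon 2^{-i}$ yet gradient of dual norm exactly filling the gap up to $1$. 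This is exactly the mechanism behind almost classical solutions of the eikonal equation (in dimension $\ge 2$ the extra directions give room to interpolate between a gradient of norm $1-\delta_i$ and gradients of norm $1$ while staying $1$-Lipschitz); the hypothesis $\lip(u_0,\partial\Omega)<1$ enters because near $\partial\Omega$ we need the perturbations to die out, and there the constraint $\|Dw\|_*=1$ must fail on a null set anyway, so we only require the wrinkling on compact subsets of $\Omega$ and let the amplitudes $\lambda_i\to 0$ as $B_i\to\partial\Omega$.

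I would then check that the resulting series $w=v+\sum_i\lambda_i\psi_i$ converges uniformly (amplitudes summable), is differentiable on $\Omega$ (each point lies in finitely many $B_i$ and the perturbations are $C^1$, or one uses a Baire-category/countable-intersection argument to make $w$ differentiable with $\|Dw\|_*=1$ a.e.), satisfies $|w-v|\le\varepsilon/2$ hence $|w-u_0|\le\varepsilon$, equals $u_0$ on $\partial\Omega$, and remains $1$-Lipschitz on $\overline\Omega$. The $1$-Lipschitz bound is the delicate bookkeeping point: adding the oscillations on overlapping balls could in principle push the constant above $1$, so one arranges the supports to have controlled overlap and chooses each $\lambda_i\psi_i$ so that, together with the slack $\delta_i$ of $Dv$ on $B_i$, the directional derivatives of $w$ never exceed $1$; a convenient way is the sup-of-cones representation, writing $w(x)=\inf_{a}\big(g(a)+\|x-a\|\big)$-type infima adapted so that $1$-Lipschitzness is automatic, and then smoothing. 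Finally one notes $\|Dw\|_*=1$ a.e.\ on $\Omega$: on $U$ by the design of the wrinkles, and on $\Omega\setminus U$ already $\|Dv\|_*=1$ except possibly on a null set where one applies the same local correction.

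The main obstacle I expect is reconciling three competing demands at once on the overlap regions of the covering: (i) differentiability of the limit $w$, (ii) the strict pointwise equality $\|Dw\|_*=1$ outside a null set, and (iii) the global $1$-Lipschitz estimate on all of $\overline\Omega$ including the limit at $\partial\Omega$. Getting (ii) and (iii) simultaneously is what rules out a $C^1$ solution (as the remark after Problem~\ref{mainproblemalmostclassicalsolutions} observes), so the construction must be genuinely non-$C^1$ at a dense set, and the care lies in making it exactly differentiable there rather than merely having one-sided derivatives; I would handle this via a carefully chosen decreasing sequence of amplitudes and mesh sizes so that at every point of $\Omega$ all but finitely many perturbations vanish in a neighborhood, reducing differentiability to a finite sum, while the a.e.\ eikonal identity holds because the ``bad'' set on each scale is a finite union of hyperplanes of measure zero.
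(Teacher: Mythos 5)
Your first step coincides with the paper's: apply Theorem~\ref{secondmaintheorem} to get $v\in C^\infty(\Omega)$, $1$-Lipschitz on $\overline{\Omega}$, with $v=u_0$ on $\partial\Omega$ and $|v-u_0|\le\varepsilon/2$. The genuine gap is in your second step, where all the real work lies. You propose to raise $\|Dv\|_*$ to $1$ almost everywhere by adding a locally finite family of sawtooth-type perturbations $\lambda_i\psi_i$, arranged so that ``at every point of $\Omega$ all but finitely many perturbations vanish in a neighborhood''. This cannot deliver the conclusion as stated: the theorem requires $w$ to be differentiable at \emph{every} point of $\Omega$ (with $\|Dw\|_*\le 1$ everywhere and $=1$ only a.e.). If near a given point only finitely many perturbations act and each $\psi_i$ is an unsmoothed distance/sawtooth function, then $w$ fails to be differentiable on the ridge set of that finite sum --- a nonempty (albeit null) set, which already violates the requirement of differentiability on all of $\Omega$. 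If instead you smooth each $\psi_i$, the gradient norm drops strictly below $1$ on a set of \emph{positive} measure around every ridge, destroying the a.e.\ eikonal identity. So the two escape routes you offer (smoothing, or finite local sums with measure-zero bad sets) are mutually exclusive with the two requirements you must meet simultaneously, and the proposal as written does not resolve this tension; it is exactly the tension that makes the existence of almost classical solutions a nontrivial theorem.

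The paper does not attempt such a direct construction. After obtaining $v$, it sets $F(x,x^*)=\|x^*+Dv(x)\|_*-1$, notes that $0$ is a subsolution because $v$ is $1$-Lipschitz and that $F(x,x^*)\ge 1$ when $\|x^*\|_*\ge 3$, and invokes Proposition~\ref{propositionexistencealmostclassicalsolution} (a refinement of \cite[Theorem 3.1]{DevilleJaramillo}, descending from \cite{DevilleMatheron}) to produce an everywhere differentiable $u$ with $u=0$ on $\partial\Omega$, $|u|\le\varepsilon/2$, $F(x,Du(x))\le 0$ everywhere and $=0$ a.e.; then $w=u+v$, and Fact~\ref{factlipschitzconstant} gives the $1$-Lipschitz bound on $\overline{\Omega}$. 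The cited existence result is itself proved by an infinite, tree-like iteration in which the kinks created at one generation are handled by all subsequent generations, and differentiability at points influenced by infinitely many generations is established through the geometric lemma on the dual ball (Proposition~\ref{propositionmonotonicbidualnorm}); this is precisely the mechanism your finite-local-sum scheme omits. To repair your argument you would either have to reproduce that infinite construction (essentially reproving Deville--Matheron/Deville--Jaramillo), or, more economically, quote their theorem for the perturbed Hamiltonian $\|x^*+Dv(x)\|_*-1$ as the paper does, adding the small observation that the solution can be taken uniformly $\le\varepsilon/2$ by refining the cube decomposition.
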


In Section \ref{sectionexample}, we prove, using the theory of almost minimizing Lipschitz extensions, 
that if $\Omega$ is an open subset in a $2$-dimensional euclidean space and
if $u_0: \partial \Omega \to \R$ is a $1$-Lipschitz function, then there exists a differentiable $1$-Lipschitz function 
$w:  \overline{\Omega} \to \R$ such that $\| D w\|_*=1 $ almost everywhere on $\Omega$ and $\: w=u_0$ on $\partial \Omega$. However, Example \ref{counterexamplel1} in Section \ref{sectionexample} shows that the above theorem is optimal in the sense of Problem \ref{mainproblemalmostclassicalsolutions}. Observe that Theorem \ref{maintheoremarbitrarynorm} covers the case of homogeneous Dirichlet conditions. Also, we notice that the above theorem does not hold when $X= \R.$ Indeed, if $u_0: [0,1] \to \R$ is $1$-Lipschitz and differentiable on $(0,1),$ with $|u_0(1)-u_0(0)| <1,$ then a result of A. Denjoy \cite{Denjoy} tells us that either $\lbrace x \: : \: |u_0'(x)| <1 \rbrace$ is empty or else it has positive Lebesgue measure. But this subset is nonempty by the Mean Value Theorem. 

\medskip

The contents of the paper are as follows. In Section \ref{sectionapproximationmetricspaces}, we show that in general metric spaces, one can approximate a Lipschitz function $u_0$ by a function which coincides with $u_0$ on a given subset and has, on bounded subsets, better Lipschitz constants. In Section \ref{sectionsmoothapproximation}, we will give the proof of Theorems \ref{generaltheorem}, \ref{secondmaintheorem} and \ref{maintheoremnonseparablehilbert} with the decisive help of the above result. In Section \ref{sectionapproximationalmostclassical}, we use Theorem \ref{secondmaintheorem} and the results in \cite{DevilleJaramillo} to prove Theorem \ref{maintheoremarbitrarynorm}. Finally, in Section \ref{sectionexample}, we consider the case $\lip(u_0, \partial \Omega) = \lip(u_0, \overline{\Omega}):$ although a partial positive result in the euclidean setting can be obtained, we show that Problem \ref{mainproblem} does not always have a positive answer in this limiting case.

\section{Approximation by functions with smaller Lipschitz constants}\label{sectionapproximationmetricspaces}
Throughout this section, all the sets involved are considered to be subsets of a metric space $(X,d)$ and all the Lipschitz constants are taken with respect to the distance $d.$ The following result will be very useful in Section \ref{sectionsmoothapproximation} and it is interesting in itself.

\begin{theorem}\label{theoremglobalapproximation}
Let $E$ and $F$ be two nonempty closed sets such that $F \subset E,$ let $u_0: E \to \R$ be a $K$-Lipschitz function such that $\lambda_0:=\lip(u_0,F) <K.$ Given $\varepsilon >0,$ there exists a function $u: E \to \R$ such that $|u-u_0| \leq \varepsilon$ on $E, \: u=u_0$ on $F$ and $u$ has the property that $\lip(u,B) <K$ for every bounded subset $B$ of $E.$
\end{theorem}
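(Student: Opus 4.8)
\emph{Plan of the proof.} Fix $\lambda$ with $\lambda_0<\lambda<K$. The idea is to obtain $u$ as an inf-convolution of $u_0$ with a metric on $E$ whose ``speed'' is $\lambda$ on $F$ and increases to, but never reaches, $K$ away from $F$. Let $r(x)=\dist(x,F)$. Choose a continuous nondecreasing $\rho:[0,\infty)\to[\lambda,K)$ with $\rho(0)=\lambda$, $\rho(t)\uparrow K$, and $\int_0^\infty (K-\rho(t))\,dt$ as small as desired (e.g. $\rho(t)=K-(K-\lambda)e^{-ct}$ with $c$ large), and define a metric $\Phi$ on $E$ by the chain formula
\[
\Phi(x,y)=\inf\Big\{\textstyle\sum_{i=1}^{n}\rho\big(\min(r(z_{i-1}),r(z_i))\big)\,d(z_{i-1},z_i)\ :\ z_0=y,\dots,z_n=x\ \text{in }E\Big\}.
\]
Then $\lambda\,d\le\Phi\le K\,d$, and for a bounded $B$ one has $\Phi(x,y)\le\rho(D_B)\,d(x,y)$ for $x,y\in B$, where $D_B:=\sup_{z\in B}r(z)<\infty$ because $F\ne\emptyset$. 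The first observation is that if $u$ is built so that $|u(x)-u(x')|\le\Phi(x,x')$ for all $x,x'$, then automatically $\lip(u,B)\le\rho(D_B)<K$ on every bounded $B$, which is the property we are after.

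\emph{Restoring the boundary values.} The naive choice $u(x)=\inf_{z\in E}\big(u_0(z)+\Phi(x,z)\big)$ has $\lip(u,B)<K$ and $u\le u_0$, but it does not in general satisfy $u=u_0$ on $F$ (a one-dimensional example with $u_0$ affine shows the infimum is attained ``at infinity''). To cure this I would add a bounded correction depending on $r$: with $M:=K+\lambda_0$ and a nondecreasing $\psi:[0,\infty)\to[0,\varepsilon/M]$ with $\psi(0)=0$ and $\psi(s)=s$ for $s$ small, set
\[
u(x)=\inf_{z\in E}\Big(u_0(z)+\Phi(x,z)+M\,\psi\big(r(z)\big)\Big).
\]
Since the added term does not depend on $x$, the triangle inequality for $\Phi$ still gives $|u(x)-u(x')|\le\Phi(x,x')$, hence $\lip(u,B)<K$; taking $z=x$ and $\psi(r(x))\le\varepsilon/M$ gives $u\le u_0+\varepsilon$; and for $x\in F$ one gets $u(x)\ge u_0(x)$ by combining $u_0(z)\ge u_0(z_F)-K r(z)\ge u_0(x)-\lambda_0 d(x,z_F)-K r(z)$ (with $z_F$ an approximate nearest point of $z$ in $F$), $\Phi(x,z)\ge\lambda\,d(x,z)$, $d(x,z_F)\le d(x,z)+r(z)$, $M\psi(r(z))\ge M r(z)$ near $F$ resp. $M\psi(r(z))\ge\,$(the deficit bound) far from $F$, and $M=K+\lambda_0$; together with $u\le u_0$ this yields $u=u_0$ on $F$. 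These are the formal parts.

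\emph{The main obstacle.} What remains, and is the real content, is the lower bound
\[
u(x)\ \ge\ u_0(x)-\varepsilon\qquad(x\in E),
\]
which simultaneously guarantees $u\not\equiv-\infty$ and $|u-u_0|\le\varepsilon$. Telescoping a chain, $u_0(z_0)+\sum_i\rho_i d_i=u_0(x)+\sum_i\big(\rho_i d_i-(u_0(z_i)-u_0(z_{i-1}))\big)$, so one must bound from above $\sum_i\big((u_0(z_i)-u_0(z_{i-1}))-\rho_i d_i\big)$ over admissible chains (one may restrict to chains of length $\le K\,d(x,z)/\lambda$, since longer chains are more expensive than the trivial one). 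A summand is positive only on an edge near $F$, where $\rho_i<K$; the point is that along any run of the chain staying within a small distance $\tau$ of $F$, the total increase of $u_0$ is controlled by $\lip(u_0,\cdot)$ on that neighbourhood, which is close to $\lambda_0<\lambda$, so such a run contributes a nonpositive main term plus an error $O((K+\lambda_0)\tau)$, while the part of the chain at distance $\ge\tau$ from $F$ contributes at most $(K-\rho(\tau))\times(\text{length})$, and the region is only visited finitely efficiently because dipping to $F$ lengthens the chain. Choosing $\tau$, the size of $\int_0^\infty(K-\rho)$, and the range of $\psi$ all small relative to $\varepsilon$ then makes the total at most $\varepsilon$. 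Carrying out this estimate rigorously — quantifying exactly how cheaply a chain can exploit the region near $F$ — is the step I expect to require the most care; once it is in place the theorem follows.
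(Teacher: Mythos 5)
Your construction of the metric $\Phi$ has a fatal flaw that is independent of the estimate you defer to the end: because each edge is charged at speed $\rho\big(\min(r(z_{i-1}),r(z_i))\big)$, a \emph{single} edge with one endpoint in (or near) $F$ costs at most $\lambda\,d$ no matter how far the other endpoint is, so $\Phi(x,z)\le \lambda\, d(x,z)$ whenever $x\in F$, and more generally $\Phi(x,\cdot)$ grows at rate at most $\rho(r(x))<K$. Since $u_0$ may decrease at rate $K$ away from $F$, the infimum defining $u$ is $-\infty$ in the simplest instances of the theorem: take $E=\R$, $F=\{0\}$, $u_0(t)=-K|t|$ (so $\lambda_0=0<K$); then $u(0)\le u_0(t)+\Phi(0,t)+M\psi(|t|)\le -(K-\lambda)|t|+\varepsilon\to-\infty$, and by the $\Phi$-triangle inequality $u\equiv-\infty$. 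The bounded correction $M\psi(r(z))\le\varepsilon$ cannot repair this, and the same undercharging already breaks your sketched proof that $u\ge u_0$ on $F$: for $x\in F$ and $z$ with $r(z)$ large, your inequalities give a deficit of order $(K+\lambda_0)\big(r(z)-\psi(r(z))\big)$, which is unbounded, against a gain $(\lambda-\lambda_0)d(x,z)$ that can be as small as $(\lambda-\lambda_0)r(z)$. So the ``main obstacle'' you identify (the lower bound $u\ge u_0-\varepsilon$) is not a delicate-but-true estimate for your $\Phi$; it is false as stated. The natural repair (charging $\rho(\max(r(z_{i-1}),r(z_i)))$, or integrating $\rho$ over the variation of $r$ along an edge) removes this trivial exploit, but then the key quantitative claim---that chains cannot cheaply exploit the region near $F$ when $\lambda_0>0$, in an arbitrary metric space with no geodesics---is precisely the whole content of the theorem and is left unproven; the two elementary bounds it suggests ($\Phi\ge\lambda d$ plus a ``descent'' term $\int_0^{r(z)}(\rho-\lambda)$) still leave a deficit growing linearly in $r(z)$ when $\lambda_0>0$.

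For comparison, the paper avoids any global modified metric. It first proves a local lemma: if $E\setminus F$ is bounded and $u_\mu$ is $\mu$-Lipschitz on $F$ with $|u_\mu-u_0|\le\delta$, then for $\mu<\lambda<K$ there is a $\lambda$-Lipschitz extension $u_\lambda$ of $u_\mu$ to $E$ with $|u_\lambda-u_0|\le\delta+\frac{1-\lambda}{\lambda-\mu}(\lambda+\mu)\big(\diam(\overline{E\setminus F})+\dist(\overline{E\setminus F},F)\big)$, obtained as the supremum of all $\lambda$-Lipschitz functions equal to $u_\mu$ on $F$ and dominated by $u_0+\delta+\varepsilon_\lambda$. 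It then exhausts $E$ by $E_n=(E\cap B(p,n))\cup F$ and applies the lemma inductively with constants $\lambda_n\uparrow$ (all $<K$) chosen so that the explicit errors sum to $\varepsilon$; on every bounded set the final $u$ agrees with some $\lambda_n$-Lipschitz $u_n$. If you want to pursue your route, you would need both a corrected definition of $\Phi$ and a genuine proof of the near-$F$ estimate; alternatively, adopting a local-extension-plus-exhaustion scheme like the paper's sidesteps the global chain analysis entirely.
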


A crucial step for proving the above theorem is the following lemma. For any two nonempty subsets $A$ and $B$ of $X$
and for any $x\in X$, we will denote 
$$
\dist(x,B):= \inf \lbrace d(x,y) \: : \: y\in B \rbrace,
$$
$$
\dist(A,B):= \inf \lbrace d(x,y) \: : x\in A,\: y\in B \rbrace\quad\text{and}\quad
\diam(A):= \sup \lbrace d(x,y) \: : \: x,y\in A \rbrace.
$$
\begin{lemma}\label{lemmalocalapproximation}
Let $E$ and $F$ be two nonempty closed subsets such that $F \subset E$ and $E \setminus F$ is bounded. Let $u_0: E \to \R$ be a $1$-Lipschitz function, let $u_\mu : F \to \R$ be $\mu$-Lipschitz, with $\mu < 1,$ let $\delta \geq 0$ and assume that $| u_\mu -u_0| \leq \delta$ on $F.$ For every $ \mu < \lambda < 1,$ there exists a function $u_\lambda: E \to \R$ such that $u_\lambda$ is $\lambda$-Lipschitz on $E$ with $u_\lambda = u_\mu$ on $F$ and $| u_0- u_\lambda |\leq \delta + \varepsilon( \lambda, \mu, E, F)$ on $E;$ where
$$
\varepsilon( \lambda, \mu, E, F) = \frac{1-\lambda}{\lambda-\mu}(\lambda + \mu) 
\left( \diam( \overline{E\setminus F}) + \dist( \overline{E\setminus F}, F ) \right)>0 
$$
and $\varepsilon( \lambda, \mu, E, F) =0$ whenever $E \setminus F= \emptyset.$ 
\end{lemma}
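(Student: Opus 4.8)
The plan is to realise $u_\lambda$ as the pointwise median of three $\lambda$-Lipschitz functions: the two extremal $\lambda$-Lipschitz extensions of $u_\mu$, and a well-chosen ``capping'' function built from $u_0$. First dispose of the trivial case: if $E\setminus F=\emptyset$ then $E=F$, so $u_\mu$ is already $\mu$-Lipschitz (hence $\lambda$-Lipschitz) on $E$, and $u_\lambda:=u_\mu$ works since $\varepsilon(\lambda,\mu,E,F)=0$. So assume $E\setminus F\neq\emptyset$. Since $\lambda>\mu\geq0$, the McShane-type formulas
$$
u_\lambda^{+}(x):=\inf_{y\in F}\bigl(u_\mu(y)+\lambda d(x,y)\bigr),\qquad u_\lambda^{-}(x):=\sup_{y\in F}\bigl(u_\mu(y)-\lambda d(x,y)\bigr)
$$
define finite $\lambda$-Lipschitz functions on $X$ (finiteness uses $\lambda\ge\mu\ge 0$) which coincide with $u_\mu$ on $F$ (because $u_\mu$ is $\mu$-Lipschitz with $\mu\le\lambda$), and $u_\lambda^{-}\le u_\lambda^{+}$ everywhere.

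The crucial step is the estimate $u_\lambda^{+}\ge u_0-\delta-\varepsilon(\lambda,\mu,E,F)$ and, symmetrically, $u_\lambda^{-}\le u_0+\delta+\varepsilon(\lambda,\mu,E,F)$ on all of $E$. On $F$ this is immediate since there $u_\lambda^{\pm}=u_\mu$ and $|u_\mu-u_0|\le\delta$. For $x\in E\setminus F$ put $r:=\dist(x,F)$. Choose a near-minimiser $y_x\in F$ with $d(x,y_x)$ close to $r$, and a near-minimiser $y^{*}\in F$ for the infimum defining $u_\lambda^{+}(x)$. From $u_\mu(y^{*})+\lambda d(x,y^{*})\lesssim u_\mu(y_x)+\lambda d(x,y_x)$ and the $\mu$-Lipschitz bound $u_\mu(y_x)-u_\mu(y^{*})\le\mu\bigl(d(x,y_x)+d(x,y^{*})\bigr)$ we deduce $(\lambda-\mu)d(x,y^{*})\lesssim(\lambda+\mu)d(x,y_x)$, hence $d(x,y^{*})\lesssim\tfrac{\lambda+\mu}{\lambda-\mu}\,r$. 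Then, using $|u_\mu-u_0|\le\delta$ on $F$ and that $u_0$ is $1$-Lipschitz,
$$
u_\lambda^{+}(x)\gtrsim u_\mu(y^{*})+\lambda d(x,y^{*})\ge u_0(x)-\delta-(1-\lambda)d(x,y^{*})\gtrsim u_0(x)-\delta-(1-\lambda)\tfrac{\lambda+\mu}{\lambda-\mu}\,r ,
$$
and letting the auxiliary parameters tend to $0$, together with the elementary triangle-inequality bound $r=\dist(x,F)\le\diam(\overline{E\setminus F})+\dist(\overline{E\setminus F},F)$ valid for $x\in\overline{E\setminus F}$, gives $u_\lambda^{+}(x)\ge u_0(x)-\delta-\varepsilon(\lambda,\mu,E,F)$; the bound for $u_\lambda^{-}$ is symmetric.

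Now write $D:=\diam(\overline{E\setminus F})$ and set $g(x):=\inf_{z\in\overline{E\setminus F}}\bigl(u_0(z)+\lambda d(x,z)\bigr)$, which is $\lambda$-Lipschitz on $X$ and finite since $u_0$ is bounded on the bounded set $\overline{E\setminus F}$. For $x\in\overline{E\setminus F}$ one checks directly, using $x,z\in\overline{E\setminus F}$ and the $1$-Lipschitz bound for $u_0$, that $u_0(x)-(1-\lambda)D\le g(x)\le u_0(x)$; moreover $(1-\lambda)D\le\varepsilon(\lambda,\mu,E,F)$, because $\tfrac{\lambda+\mu}{\lambda-\mu}\ge1$ and $D+\dist(\overline{E\setminus F},F)\ge D$. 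Finally define
$$
u_\lambda:=\max\bigl(u_\lambda^{-},\ \min(u_\lambda^{+},g)\bigr),
$$
the pointwise median of the $\lambda$-Lipschitz functions $u_\lambda^{-},u_\lambda^{+},g$, so $u_\lambda$ is $\lambda$-Lipschitz on $E$ by stability of the Lipschitz constant under $\min$ and $\max$. On $F$, where $u_\lambda^{-}=u_\lambda^{+}=u_\mu$, the median collapses to $u_\mu$, so $u_\lambda=u_\mu$ on $F$. On $E\setminus F$, $u_\lambda(x)$ always lies between $u_\lambda^{-}(x)$ and $u_\lambda^{+}(x)$; a short case analysis according to the position of $g(x)$ relative to the interval $[u_\lambda^{-}(x),u_\lambda^{+}(x)]$, combined with the three estimates $u_\lambda^{+}\ge u_0-\delta-\varepsilon$, $u_\lambda^{-}\le u_0+\delta+\varepsilon$, and $|g-u_0|\le(1-\lambda)D\le\varepsilon$ on $E\setminus F$, yields $|u_\lambda-u_0|\le\delta+\varepsilon(\lambda,\mu,E,F)$ on $E$. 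I expect the main obstacle to be the ``reach'' estimate $d(x,y^{*})\lesssim\tfrac{\lambda+\mu}{\lambda-\mu}\dist(x,F)$ of the second paragraph, since that is exactly where the constant $\tfrac{1-\lambda}{\lambda-\mu}(\lambda+\mu)$ and the hypothesis $\lambda>\mu$ are forced; everything else is routine bookkeeping with lattice operations on Lipschitz functions.
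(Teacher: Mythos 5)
Your construction is correct, and it takes a genuinely different route from the paper. The paper also starts from your $u_\lambda^{-}$ (the sup-convolution extension of $u_\mu$), but only to show that a certain family $\mathcal{C}_\lambda$ of $\lambda$-Lipschitz functions lying below $u_0+\delta+\varepsilon_\lambda$ and equal to $u_\mu$ on $F$ is nonempty; it then defines $u_\lambda$ as the pointwise supremum of $\mathcal{C}_\lambda$ (a Perron-type, maximal extension), and the delicate part is the lower bound $u_0\le u_\lambda+\delta+\varepsilon_\lambda$, obtained by first representing $u_\lambda$ as an infimal convolution over $F\cup S_\lambda$, where $S_\lambda$ is the set where $u_\lambda$ exceeds $u_0+\delta+\varepsilon_\lambda/2$, and then running a contradiction argument (the claim $G_\lambda=\emptyset$). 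Your explicit median $u_\lambda=\max\bigl(u_\lambda^{-},\min(u_\lambda^{+},g)\bigr)$ bypasses that machinery entirely: the agreement with $u_\mu$ on $F$ and the $\lambda$-Lipschitz bound are immediate from lattice stability, the upper estimate follows from $u_\lambda^{-}\le u_0+\delta+\varepsilon_\lambda$ together with $g\le u_0$ on $\overline{E\setminus F}$, and the lower estimate from $u_\lambda^{+}\ge u_0-\delta-\varepsilon_\lambda$ together with $g\ge u_0-(1-\lambda)\diam(\overline{E\setminus F})\ge u_0-\varepsilon_\lambda$; the only real work is the reach estimate $(\lambda-\mu)d(x,y^{*})\le(\lambda+\mu)\,d(x,z)$ for near-optimal points, which is exactly the computation the paper performs twice (in Step 1 for $v=u_\lambda^{-}$ and again in Step 3). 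What the paper's approach buys is the canonical maximal solution; since nothing later in the paper uses maximality, your version loses nothing and is shorter and more symmetric. Two small points worth making explicit when writing it up: the inequality $(1-\lambda)\diam(\overline{E\setminus F})\le\varepsilon_\lambda$ uses $\tfrac{\lambda+\mu}{\lambda-\mu}\ge1$, i.e.\ $\mu\ge0$ (true, since $\mu$ is a Lipschitz constant), and the ordering $u_\lambda^{-}\le u_\lambda^{+}$ (which makes your formula a genuine median) is not actually needed for any of the three required properties, so you may either prove it or drop the word ``median''.
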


\begin{proof}
In the case when $E \setminus F = \emptyset,$ we have that $E=F$ and then it is enough to take $u_\lambda = u_\mu.$ From now on, we assume that $E \setminus F\ne\emptyset$, we fix $\mu <\lambda < 1$,  and we denote $\varepsilon_\lambda = \varepsilon( \lambda, \mu, E, F)$. We now define the strategy of proof of the lemma. We first show that the family
$$
\mathcal{C}_\lambda := \lbrace u : E \to \R \: : \: u \:\:\text{is} \: \: \lambda\text{-Lipschitz on} \:\: E, \: u\leq u_0 + \delta+ \varepsilon_\lambda \:\: \text{on} \:\: E, \: u=u_\mu \:\: \text{on} \:\: F \rbrace
$$
is nonempty, and then we define the function $u_\lambda$ by:
\begin{equation} \label{definitionulambda}
u_\lambda (x) := \sup\lbrace u(x) \: : \: u \in \mathcal{C}_\lambda \rbrace, \quad x\in E.
\end{equation}
In order to prove that the function $u_\lambda$ is the required solution, it will be enough to check that 
$u_\lambda\in\mathcal{C}_\lambda$ and that $u_0 \leq u_\lambda + \delta + \varepsilon_\lambda$ on $E.$

\item[] $\textbf{1.}$ We now prove that the family $\mathcal{C}_\lambda$ is nonempty. Consider the function
$$
v(x)= \sup_{y\in F} \lbrace u_\mu(y)-\lambda d(x,y) \rbrace, \quad x\in E,
$$
and let us see that $v\in C_\lambda.$ Since $u_\mu$ is $\lambda$-Lipschitz (in fact, $\mu$-Lipschitz) on $F,$ it follows from standard calculations concerning the sup convolution of Lipschitz functions that $v$ is a well-defined $\lambda$-Lipschitz function on $E$ with $v=u_\mu$ on $F.$ Now, given $x \in E\setminus F$ and $y \in F$ let us see that $u_\mu(y)-\lambda d(x,y) \leq u_0(x) + \delta+ \varepsilon_\lambda.$ For every $\eta>0,$ we can find a point $z_\eta\in F$ with
\begin{equation}\label{pointminimizingdistance}
\dist(x,F ) + \eta \geq d(x,z_\eta).
\end{equation}
In the case when $u_\mu(y)-\lambda d(x,y) < u_\mu(z_\eta)-\lambda d(x,z_\eta),$ by the assumption that $|u_\mu-u_0| \leq \delta$ on $F$ together with \eqref{pointminimizingdistance} and the fact that 
$\dist(x,F)\le\varepsilon_\lambda$, we have that
\begin{align*}
 u_\mu(y)-\lambda d(x,y) & < u_\mu(z_\eta)-\lambda d(x,z_\eta) \leq u_0(z_\eta) +\delta - \lambda d(x,z_\eta) \leq u_0(x)+ \delta + (1-\lambda) d(x,z_\eta) \\
 & \leq u_0(x)+ \delta + (1-\lambda) \left( \dist(x,F ) + \eta \right) \leq u_0(x)+ \delta +\varepsilon_\lambda + (1-\lambda) \eta.
\end{align*}
In the case when $u_\mu(y)-\lambda d(x,y) \geq u_\mu(z_\eta)-\lambda d(x,z_\eta).$ The fact that $u_\mu$ is $\mu$-Lipschitz on $F$ yields
\begin{align*}
u_\mu(y)-\lambda d(x,y) & \geq u_\mu(z_\eta)-\lambda d(x,z_\eta) \geq u_\mu(y)-\mu d(y,z_\eta)-\lambda d(x,z_\eta) \\
& \geq u_\mu(y) -\mu d(x,y)- \mu d(x,z_\eta)-\lambda d(x,z_\eta),
\end{align*}
which in turn implies 
\begin{equation}\label{comparabledistance}
(\lambda- \mu) d(x,y) \leq (\lambda+ \mu) d(x,z_\eta).
\end{equation}
Using first that $u_0$ is $1$-Lipschitz on $E$ and then \eqref{comparabledistance} and \eqref{pointminimizingdistance}, we obtain
\begin{align*}
u_\mu(y)-\lambda d(x,y)  & \leq u_0(y)+ \delta -\lambda d(x,y) \leq u_0(x)+ \delta + (1-\lambda ) d(x,y) \\
& \leq u_0(x)+ \delta+ \frac{1-\lambda}{\lambda-\mu}(\lambda + \mu) d(x,z_\eta) \leq u_0(x)+ \delta+ \frac{1-\lambda}{\lambda-\mu}(\lambda + \mu) \left( \dist(x,F)+ \eta \right) \\
& \leq u_0(x)+ \delta+ \varepsilon_\lambda + \frac{1-\lambda}{\lambda-\mu}(\lambda + \mu) \: \eta.
\end{align*}
Hence, in both cases, we have that
$$
 u_\mu(y)-\lambda d(x,y)  \leq u_0(x)+ \delta+ \varepsilon_\lambda + \frac{1-\lambda}{\lambda-\mu}(\lambda + \mu) \: \eta,
 $$
and letting $\eta \to 0^+,$ it follows that $v(x) \leq u_0(x)+ \delta+ \varepsilon_\lambda$ for every $x\in \overline{E \setminus F}.$ This proves the inequality $v \leq u_0+ \delta+ \varepsilon_\lambda$ on $E,$ which shows that $v\in \mathcal{C}_\lambda.$ 

\medskip

\item[] $\textbf{2.}$ 
The function $u_\lambda$ belongs to $\mathcal{C}_\lambda$ because a supremum of $\lambda$-Lipschitz functions 
is a $\lambda$-Lipschitz function, and because inequalities and equalities are preserved by taking supremum. 
Before proving the inequality 
$u_0 \leq u_\lambda + \delta + \varepsilon_\lambda$ on $E$, 
we first show that $u_\lambda$ coincides with the function
$$
v_\lambda(x):= \inf_{y \in F \cup S_\lambda} \lbrace u_\lambda(y) + \lambda d(x,y) \rbrace, \quad x\in E;
$$ 
where
$$
S_\lambda= \left\lbrace x\in E  \: : \: u_\lambda(x) \geq u_0(x)+\delta+ \frac{\varepsilon_\lambda}{2} \right\rbrace.
$$
Observe that, since $u_\mu \leq u_0+ \delta$ on $F, \: S_\lambda$ and $F$ are disjoint. Since $u_\lambda$ is $\lambda$-Lipschitz on $E$ (and, in particular, on $F \cup S_\lambda$), the function $v_\lambda$ is the greatest $\lambda$-Lipschitz extension of $u_\lambda$ from the set $F \cup S_\lambda.$ Thus $v_\lambda = u_\lambda$ on $F \cup S_\lambda$ and $u_\lambda \leq v_\lambda$ on $E.$ Hence, by \eqref{definitionulambda}, we will have that $v_\lambda = u_\lambda$ as soon as we see that $v_\lambda \leq u_0 + \delta+ \varepsilon_\lambda$ on $E.$ Let us define
$$
G_\lambda = \lbrace x\in E \setminus \left( F \cup S_\lambda \right) \: : \: v_\lambda(x) \geq u_0(x) + \delta + \varepsilon_\lambda \rbrace.
$$
\begin{claim}\label{subsetemptyclaim}
$G_\lambda = \emptyset.$
\end{claim}
Assume that $G_\lambda \neq \emptyset.$ Since $E\setminus F$ is bounded, then $v_\lambda-u_0$ is bounded on $G_\lambda$ and we can define 
$$
a:= \sup_{G_\lambda} \lbrace v_\lambda - u_0 \rbrace.
$$ It is obvious that $a \geq  \delta + \varepsilon_\lambda.$ We can pick a point $y \in G_\lambda$ such that
\begin{equation}\label{approximationsupremumglambda}
 v_\lambda(y)-u_0(y) \geq a - \frac{\varepsilon_\lambda}{2}.
\end{equation}
We next define the function
$$
w_\lambda : = \max \lbrace u_\lambda ,  v_\lambda -a + \delta +  \varepsilon_\lambda \rbrace  : E \to \R.
$$
The function $w_\lambda$ is $\lambda$-Lipschitz on $E$ and satisfies the following.
\item[] $(i)$ On the set $ F \cup S_\lambda,$ we have $v_\lambda= u_\lambda.$ Since $a \geq  \delta + \varepsilon_\lambda,$ we have that $w_\lambda = u_\lambda$ on $F \cup S_\lambda.$ In particular $w_\lambda = u_\mu$ on $F.$ 
\item[] $(ii)$ On $G_\lambda,$ we have, by the definition of $a,$ that 
$
v_\lambda -a  \leq  u_0.
$
Since we always have $u_\lambda \leq u_0 + \delta +  \varepsilon_\lambda,$ the function $w_\lambda$ satisfies $w_\lambda \leq u_0 +   \delta + \varepsilon_\lambda$ on $G_\lambda.$
\item[] $(iii)$ If $x\in E \setminus  (G_\lambda \cup F \cup S_\lambda),$ then
$$
v_\lambda (x)-a < u_0(x)+ \delta + \varepsilon_\lambda - a  \leq u_0(x),
$$
together with $u_\lambda \leq u_0 + \delta+  \varepsilon_\lambda$ on $E,$ this implies $w_\lambda(x) \leq u_0(x) + \delta + \varepsilon_\lambda.$

\medskip

From the remarks $(i), (ii)$ and $(iii)$ above we obtain that $w_\lambda \leq u_0 + \delta+  \varepsilon_\lambda$ on $E$ with $w_\lambda = u_\mu$ on $F.$ By \eqref{definitionulambda} we must have $w_\lambda \leq u_\lambda$ on $E.$ But, for the point $y \in G_\lambda,$ (see \eqref{approximationsupremumglambda}) it follows that
$$
u_\lambda(y)  \geq  w_\lambda(y) \geq v_\lambda(y)-a+ \delta + \varepsilon_\lambda \geq u_0(y)+ \delta + \frac{\varepsilon_\lambda}{2} .
$$
It turns out that $y$ belongs to $S_\lambda,$ which is a contradiction since $G_\lambda$ and $S_\lambda$ are disjoint subsets. This proves Claim \ref{subsetemptyclaim}.

\medskip

Finally, because $G_\lambda = \emptyset,$ it is clear that $v_\lambda \leq u_0 + \delta+ \varepsilon_\lambda$ on $E$ and therefore
\begin{equation}\label{ulambdaequalinfimalconvolution}
u_\lambda (x) = v_\lambda (x) = \inf_{y\in F \cup S_\lambda} \lbrace u_\lambda(y) + \lambda d(x,y) \rbrace, \quad x\in E.
\end{equation}

\medskip

\item[] $\textbf{3.}$ We now show that $u_0(x) \leq u_\lambda(x) + \delta + \varepsilon_\lambda$ for every $x\in E.$ Since $u_0 \leq u_\mu + \delta =u_\lambda+ \delta$ on $F,$ we only need to consider the situation when $x\in E \setminus F.$ Let us fix $\eta >0.$ We can find a point $z_\eta \in F$ with
\begin{equation}\label{pointminimizingdistance2}
\dist(x,F) + \eta \geq d(x,z_\eta).
\end{equation}
Moreover, by \eqref{ulambdaequalinfimalconvolution}, it is clear that there exists $y_\eta \in F \cup S_\lambda$ such that
\begin{equation}\label{sequenceynminimizing}
u_\lambda(y_\eta) + \lambda d(x,y_\eta) \leq \min \left\lbrace u_\lambda(z_\eta) + \lambda d(x,z_\eta), u_\lambda(x)+\eta \right\rbrace.
\end{equation}
Suppose first that $y_\eta \in S_\lambda.$ In particular $y_\eta \in E \setminus F$ and $u_\lambda(y_\eta)\geq u_0(y_\eta)+ \delta+ \frac{\varepsilon_\lambda}{2}.$ Using that $u_0$ is $1$-Lipschitz together with \eqref{sequenceynminimizing} we obtain
\begin{align*}
u_0(x) & \leq u_0(y_\eta) + d(x,y) =  u_0(y_\eta) + \lambda d(x,y_\eta) + (1- \lambda) d(x,y_\eta) \\
& \leq u_\lambda(y_\eta)-\delta- \frac{\varepsilon_\lambda}{2} + \lambda d(x,y_\eta)+ (1- \lambda) d(x,y_\eta) \\
& \leq u_\lambda(x)+\eta -\delta- \frac{\varepsilon_\lambda}{2}+ (1-\lambda) \diam( \overline{E \setminus F}) \leq u_\lambda(x) + \delta +\varepsilon_\lambda + \eta.
\end{align*}
Suppose now that $y_\eta \in F.$ Using \eqref{sequenceynminimizing} and the fact that $u_\lambda$ is $\mu$-Lipschitz on $F,$ we can write
\begin{align*}
u_\lambda(z_\eta) + \lambda d(x,z_\eta) & \geq u_\lambda (y_\eta) + \lambda d(x,y_\eta) \geq u_\lambda(z_\eta)- \mu d(y_\eta,z_\eta)+\lambda d(x,y_\eta) \\
& \geq u_\lambda(z_\eta)- \mu d(x,z_\eta)+(\lambda - \mu) d(x,y_\eta) ,
\end{align*}
which implies, taking into account \eqref{pointminimizingdistance2},
\begin{equation}\label{comparabledistances2}
d(x,y_\eta) \leq \frac{\lambda + \mu}{\lambda-\mu} d(x,z_\eta) \leq \frac{\lambda + \mu}{\lambda-\mu} (\dist(x,F)+\eta) \leq \frac{\varepsilon_\lambda}{1-\lambda} + \frac{\lambda + \mu}{\lambda-\mu} \: \eta.
\end{equation}
Bearing in mind that $u_\lambda + \delta = u_\mu+ \delta \geq u_0$ on $F$ and using \eqref{sequenceynminimizing} and \eqref{comparabledistances2} we obtain
\begin{align*}
& u_0(x)  \leq  u_0(y_\eta) + \lambda d(x,y_\eta) + (1- \lambda) d(x,y_\eta) \\
& \leq u_\lambda(y_\eta) + \delta + \lambda d(x,y_\eta) + (1- \lambda) d(x,y_\eta) \leq u_\lambda(x) + \eta + \delta+ \varepsilon_\lambda + (1-\lambda) \frac{\lambda + \mu}{\lambda-\mu} \: \eta.
\end{align*}
We have thus shown the inequality
$$
u_0(x) \leq u_\lambda(x) + \delta + \varepsilon_\lambda + \eta + (1-\lambda) \frac{\lambda + \mu}{\lambda-\mu} \: \eta \quad \text{on} \quad E.
$$
Letting $\eta \to 0^+,$ we conclude that $u_0(x) \leq u_\lambda(x) + \delta + \varepsilon_\lambda$ for every $x\in E.$
\end{proof}

\medskip

\begin{proof}[Proof of Theorem \ref{theoremglobalapproximation}]
Without loss of generality we may and do assume that $K=1.$ Let us fix a point $p\in  F$ and set $E_n = \left( E \cap B(p,n) \right) \cup  F$ and $F_n=E_{n-1}$ for every $n \geq 1,$ where $F_1 = E_0 = F.$ It is clear that we can construct an increasing sequence of numbers $\lbrace \lambda_n \rbrace_{n \geq 1}$ with $\lambda_0 < \lambda_1$ and $\lambda_n  <1$ for every $n \geq 1$ such that
\begin{equation}\label{inequalitychoicesequence}
\frac{1-\lambda_n}{\lambda_n-\lambda_{n-1}}(\lambda_n + \lambda_{n-1}) \left( \diam( \overline{E_n\setminus F_n}) + \dist( \overline{E_n\setminus F_n}, F_n ) \right) \leq \frac{\varepsilon}{2^n}
\end{equation}
for every $n \geq 1$ such that $E_n \setminus F_n \neq \emptyset.$ Let us construct by induction a sequence of functions $\lbrace u_n \rbrace_{n \geq 1}$ such that each $u_n : E_n \to \R$ is $\lambda_n$-Lipschitz on $E_n$ and satisfy $u_n=u_{n-1}$ on $E_{n-1}$ and $|u_n-u_0| \leq \varepsilon$ on $E_n$ for every $n \geq 1.$

\medskip
Since $u_0|_F$ is $\lambda_0$-Lipschitz, we can apply Lemma \ref{lemmalocalapproximation} with $F_1 \subset E_1, \: \delta=0, \: u_0: E_1 \to \R, \: \mu = \lambda_0, \: u_\mu =u_0|_{F_1}$ in order to obtain a $\lambda_1$-Lipschitz function $u_1: E_1 \to \R$ such that $u_1=u_\mu =u_0$ on $F_1$ and
$ |u_1-u_0| \leq \frac{\varepsilon}{2}$ on $ E_1,$ thanks to \eqref{inequalitychoicesequence}. Observe that $u_1=u_0$ on $F.$ Now assume that we have constructed functions $u_1, \ldots, u_n$ respectively defined on $E_1, \ldots, E_n$ such that each $u_k$ is $\lambda_k$-Lipschitz on $E_k,$ with $u_k=u_{k-1}$ on $E_{k-1}=F_k$ and 
$$
| u_k- u_0|  \leq \frac{\varepsilon}{2}+ \cdots + \frac{\varepsilon}{2^k} \quad \text{on} \quad E_k,
$$
for every $1 \leq k \leq n.$ Then we apply Lemma \ref{lemmalocalapproximation} with $\delta = \varepsilon/2+ \cdots + \varepsilon/2^n, \: E_n=F_{n+1} \subset E_{n+1}, \: \mu = \lambda_n, \: u_\mu = u_n: E_n \to \R$ and $u_0: E_{n+1} \to \R$ to obtain a $\lambda_{n+1}$-Lipschitz function $u_{n+1}: E_{n+1} \to \R$ such that $u_{n+1}=u_0$ on $E_n$ and, thanks to \eqref{inequalitychoicesequence},
$$
| u_{n+1}- u_0|  \leq \frac{\varepsilon}{2}+ \cdots + \frac{\varepsilon}{2^{n+1}}  \quad \text{on} \quad E_{n+1}.
$$
This proves the induction. We now define the function $u: E \to \R$ as follows: given $x\in E,$ we take a positive integer $n$ with $x\in E_n$ and set $u(x):=u_n(x).$ Since $E = \bigcup_{n \geq 1} E_n$ and each $u_n$ coincides with $u_{n-1}$ on $E_{n-1},$ the function $u$ is well defined. Because $u=u_n$ on each $E_n,$ we have that 
$$
|u-u_0| = |u_n-u_0| \leq \varepsilon \quad \text{on} \quad E_n,
$$
which implies that $|u-u_0| \leq \varepsilon$ on $E.$ Also, note that $u=u_0$ on $F$ because $u=u_1$ on $E_1$ and $u_1=u_0$ on $F \subset E_1.$ Finally, given a bounded subset $B$ of $E,$ we can find some natural $n$ with $B \subset E_n.$ This implies that $u=u_n$ on $B,$ where $u_n$ is $\lambda_n$-Lipschitz and $\lambda_n <1.$
\end{proof}

\section{Approximation by smooth Lipschitz functions: Proof of Theorem \ref{generaltheorem}}\label{sectionsmoothapproximation}
This section contains the proofs of Theorems \ref{generaltheorem}, \ref{secondmaintheorem} and \ref{maintheoremnonseparablehilbert}. Let us start with the proof of Theorem \ref{generaltheorem}, so let us assume from now on that $X$ is a Banach space satisfying the hypothesis of Theorem \ref{generaltheorem} for some $k\in \N \cup \lbrace \infty \rbrace.$ We will need to use the following two claims.

\begin{claim}\label{c0fineapproximation}
Let $\Omega \subset X$ be an open subset and let $u : \Omega \to \R$ be a Lipschitz function. For every continuous function $\varepsilon : \Omega \to (0,+ \infty)$ there exists $v : \Omega \to \R$ of class $C^k(\Omega)$ such that 
\item[] $(1)$ $|u(x)-v(x)| \leq \varepsilon(x)$ for all $x\in \Omega.$
\item[] $(2)$ $\| D v(x) \|_* \leq \lip( u , B(x, \varepsilon(x)) \cap \Omega ) + \varepsilon(x)$ for all $x\in \Omega.$ 
\end{claim}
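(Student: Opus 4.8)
The plan is a localization-and-gluing argument. \textbf{Step 1 (localization).} For each $p\in\Omega$ pick $\delta_p>0$ so small that the open ball $U_p:=\{x\in X:\|x-p\|<\delta_p\}$ is contained in $\Omega$, that $\delta_p\le\tfrac14\varepsilon(p)$, and that $\varepsilon(x)\ge\tfrac12\varepsilon(p)$ whenever $\|x-p\|\le\delta_p$ (possible since $\varepsilon$ is continuous and positive). Let $\tilde u_p:X\to\R$ be the McShane extension $\tilde u_p(x):=\inf_{a\in B(p,\delta_p)\cap\Omega}\big(u(a)+\lambda_p\|x-a\|\big)$ of $u|_{B(p,\delta_p)\cap\Omega}$, where $\lambda_p:=\lip\big(u,B(p,\delta_p)\cap\Omega\big)$; thus $\tilde u_p$ is $\lambda_p$-Lipschitz on $X$ and $\tilde u_p=u$ on $B(p,\delta_p)\cap\Omega\supseteq U_p$. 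Note that for every $x\in U_p$ one has $B(p,\delta_p)\cap\Omega\subset B(x,\varepsilon(x))\cap\Omega$, whence $\lambda_p\le\lip\big(u,B(x,\varepsilon(x))\cap\Omega\big)$. Apply the hypothesis of Theorem~\ref{generaltheorem} to $\tilde u_p$ with a number $\eta_p>0$ to be fixed in Step~3: we get $g_p\in C^k(X)$ with $|\tilde u_p-g_p|\le\eta_p$ on $X$ and $\lip\big(g_p,B(x_0,r)\big)\le\lambda_p+\eta_p$ for every ball, so in particular $\|Dg_p(x)\|_*\le\lambda_p+\eta_p\le\lip\big(u,B(x,\varepsilon(x))\cap\Omega\big)+\eta_p$ and $|g_p(x)-u(x)|\le\eta_p$ for every $x\in U_p$.

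\textbf{Step 2 (gluing).} Fix a $C^k$-smooth, locally finite partition of unity $\{\varphi_i\}_{i\in I}$ subordinate to the open cover $\{U_p\}_{p\in\Omega}$, say $\operatorname{supp}\varphi_i\subset U_{p(i)}$, chosen moreover so that $M_i:=\sup_{\Omega}\|D\varphi_i\|_*<\infty$ and each index overlaps only finitely many others; abbreviate $g_i:=g_{p(i)}$, $\eta_i:=\eta_{p(i)}$, $c_i:=\tfrac12\varepsilon(p(i))>0$, and let $v:=\sum_{i\in I}\varphi_i g_i\in C^k(\Omega)$. Since $u=\tilde u_{p(i)}$ on $\operatorname{supp}\varphi_i$, for every $x\in\Omega$ one has $|v(x)-u(x)|=\big|\sum_i\varphi_i(x)(g_i(x)-u(x))\big|\le\sum_i\varphi_i(x)\eta_i$. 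For the derivative, split
\[
Dv(x)=\sum_i\big(D\varphi_i(x)\big)g_i(x)+\sum_i\varphi_i(x)\,Dg_i(x).
\]
The second sum is a convex combination of the $Dg_i(x)$ over the finitely many $i$ with $\varphi_i(x)>0$, which all satisfy $x\in\operatorname{supp}\varphi_i\subset U_{p(i)}$, so by Step~1 its dual norm is at most $\sum_i\varphi_i(x)\|Dg_i(x)\|_*\le\lip\big(u,B(x,\varepsilon(x))\cap\Omega\big)+\sum_i\varphi_i(x)\eta_i$. For the first sum I would exploit the identity $\sum_iD\varphi_i\equiv0$ (obtained by differentiating $\sum_i\varphi_i\equiv1$): fixing any index $i_0$ with $\varphi_{i_0}(x)>0$,
\[
\sum_i\big(D\varphi_i(x)\big)g_i(x)=\sum_i\big(D\varphi_i(x)\big)\big(g_i(x)-g_{i_0}(x)\big),
\]
and since $|g_i(x)-g_{i_0}(x)|\le|g_i(x)-u(x)|+|u(x)-g_{i_0}(x)|\le\eta_i+\eta_{i_0}$ whenever $\varphi_i(x)>0$, this term has dual norm $\le 2\big(\sum_i\|D\varphi_i(x)\|_*\big)\max\{\eta_i:\varphi_i(x)>0\}$, a finite sum by point-finiteness.

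\textbf{Step 3 (choice of parameters and conclusion).} After the partition is fixed, let $N(i)$ be the finite set of $j$ with $\operatorname{supp}\varphi_j\cap\operatorname{supp}\varphi_i\ne\emptyset$ and put
\[
\eta_{p(i)}=\eta_i:=\frac{c_i}{4\,\big(1+\sum_{j\in N(i)}M_j\big)}>0 .
\]
Given $x\in\Omega$, choose $i^\ast$ maximizing $\eta_i$ among those $i$ with $\varphi_i(x)>0$; then $x\in\operatorname{supp}\varphi_{i^\ast}$, so $\varepsilon(x)\ge c_{i^\ast}$ and $\{i:\varphi_i(x)>0\}\subset N(i^\ast)$. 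Hence $\sum_i\varphi_i(x)\eta_i\le\eta_{i^\ast}\le\tfrac12\varepsilon(x)$ and $2\big(\sum_i\|D\varphi_i(x)\|_*\big)\eta_{i^\ast}\le 2\eta_{i^\ast}\sum_{j\in N(i^\ast)}M_j\le\tfrac12\varepsilon(x)$. Substituting these two bounds into the estimates of Step~2 gives $|u(x)-v(x)|\le\varepsilon(x)$ and $\|Dv(x)\|_*\le\lip\big(u,B(x,\varepsilon(x))\cap\Omega\big)+\varepsilon(x)$ for all $x\in\Omega$, which is the Claim.

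I would flag two points. The essential idea — and the reason one cannot simply take one global McShane extension of $u$ followed by a single application of the hypothesis — is that the required bound is the \emph{local} quantity $\lip(u,B(x,\varepsilon(x))\cap\Omega)$ rather than $\lip(u,\Omega)$; this forces the extension to be made from small balls, and then the main technical obstacle is dominating the (a priori large) derivatives of the partition of unity in $\sum_i(D\varphi_i)g_i$ by the smallness of the $\eta_p$, which works precisely because the $g_p$ all agree with $u$, hence with one another, on the overlaps — this is what makes the identity $\sum_iD\varphi_i\equiv0$ usable. Secondly, the argument needs $C^k$-smooth locally finite partitions of unity subordinate to arbitrary open covers: these are classical in the spaces to which Theorem~\ref{generaltheorem} will be applied (finite dimensional spaces, Hilbert spaces, $c_0(\Gamma)$), and in general one observes that the hypothesis of Theorem~\ref{generaltheorem} already yields $C^k$ bump functions on $X$ (approximate $x\mapsto\max\{0,1-\|x-x_0\|\}$ to within $1/4$ and compose with a $C^\infty$ real function), from which such partitions can be built.
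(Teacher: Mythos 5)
Your overall scheme is the same as the paper's: cover $\Omega$ by balls $B(p,\delta_p)$ with $\delta_p\le\varepsilon(p)/4$ and $\varepsilon\ge\varepsilon(p)/2$ on the ball, glue smooth local approximations with a $C^k$ Lipschitz partition of unity, and control the bad term $\sum_i (D\varphi_i)g_i$ via $\sum_i D\varphi_i\equiv 0$ by making the uniform closeness of each $g_i$ to $u$ beat the size of $D\varphi_i$. Your only real deviation in Step 1 (applying the hypothesis to local McShane extensions $\tilde u_p$, using just the global Lipschitz bound $\lambda_p+\eta_p$, instead of applying it once to a global extension of $u$ and invoking the local estimate $\lip(g,B(x_0,r))\le\lip(f,B(x_0,r+\eta))+\eta$ as the paper does) is harmless and correct.

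The genuine gap is in Steps 2--3: you need the family of supports to be \emph{star-finite} (each $N(i)$ finite) with $\sum_{j\in N(i)}M_j<\infty$ in order to define $\eta_i$, and such a partition of unity does not exist in the non-separable spaces the Claim must serve ($c_0(\Gamma)$ and non-separable Hilbert spaces, via Theorems \ref{secondmaintheorem} and \ref{maintheoremnonseparablehilbert}). Indeed, a star-finite open cover of a connected space is necessarily countable (chain together intersecting members: each member meets finitely many others, so each chain-component is countable, and by connectedness one component covers everything); taking $\Omega=X$ and $\varepsilon\equiv 1$, your supports have diameter $\le 1/2$, and no countable family of sets of diameter $\le 1/2$ can cover a space containing an uncountable $1$-separated set such as $\lbrace e_\gamma\rbrace$ in $c_0(\Gamma)$ or an orthonormal system in $\ell_2(\Gamma)$. (Your closing remark also understates the difficulty of producing $C^k$ Lipschitz locally finite partitions of unity at all in this generality: the paper gets them from \cite[Lemma 3.6]{MJSLSG}, whose hypothesis is exactly the $C_0$-factor approximation property implied by the assumption on $X$.) The repair is the one the paper uses and it fits your argument verbatim: the MJSLSG partition is indexed by $\N\times\Omega$, and near any point, for each level $n$ at most one $\varphi_{n,p}$ is active and only levels $n\le n_x$ occur; so instead of dividing by $\sum_{j\in N(i)}M_j$, choose the closeness parameter for the approximation attached to $\varphi_{n,p}$ of the form $\varepsilon(p)/\bigl((1+\lip(\varphi_{n,p}))2^{n+2}\bigr)$, i.e.\ beat only that index's own Lipschitz constant and insert a summable weight $2^{-n}$; then $\sum_{n,p}\eta_{n,p}\|D\varphi_{n,p}(x)\|_*\le\varepsilon(x)/2$ without any star-finiteness. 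With that change your proof goes through; as written, Step 3 cannot be carried out beyond the separable case.
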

\begin{proof}
By replacing $\varepsilon$ with $\min\lbrace \varepsilon, \frac{1}{2} \dist( \cdot, \partial \Omega) \rbrace,$ we may and do assume that $\varepsilon \leq \frac{1}{2} \dist( \cdot, \partial \Omega)$ on $\Omega,$ which implies that $B(x, \varepsilon(x))$ is contained in $\Omega$ for every $x\in \Omega.$ By continuity of $\varepsilon,$ for each $p \in \Omega,$ there exists $0< \delta_p \leq \varepsilon(p)/4$ such that $\varepsilon(x) \geq  \varepsilon(p)/2$ for all $x\in B(p, \delta_p).$ The assumption on $X$ implies in particular that there exists a constant $C_0 \geq 1$ such that, for every Lipschitz function $f: X \to \R$ and every $\eta >0,$ there exists a $C^k$ Lipschitz function $g: X \to \R$ such that $|f-g| \leq \eta$ on $X$ and $\lip(g,X) \leq C_0 \lip(f,X).$ Then, as a consequence of \cite[Lemma 3.6]{MJSLSG}, there exists a partition of unity $\lbrace \varphi_{n,p} \rbrace_{(n,p)\in \N \times \Omega}$ of class $C^k(\Omega)$ and Lipschitz such that $\sop(\varphi_{n,p}) \subset B(p,\delta_p)$ for every $(n,p) \in \N \times \Omega,$ and for every $x\in \Omega,$ there exists an open neighbourhood $U_x$ of $x$ and a positive integer $n_x$ such that
\begin{align}\label{locallyfiniteness}
& \text{If} \quad n >n_x, \quad \text{then} \quad U_x \cap \sop(\varphi_{n,p})= \emptyset \quad \text{for every} \: \: p\in \Omega. \\
& \text{If} \quad n \leq n_x, \quad \text{then} \quad U_x \cap \sop(\varphi_{n,p}) \neq  \emptyset \quad \text{for at most one} \: \: p\in \Omega. \nonumber
\end{align}

We can assume that $u$ is extended to all of $X$ with the same Lipschitz constant. Using the assumption on $X,$ we can find a family of $C^k(X)$ Lipschitz functions $\lbrace v_{n,p} \rbrace_{(n,p)\in \N \times \Omega}$ such that, for every $(n,p) \in \N \times \Omega,$
\begin{equation}\label{approximationgivenbylemma}
| u-v_{n,p}| \leq \frac{\varepsilon(p)}{(1+\lip(\varphi_{n,p}))2^{n+2}} \quad \text{on} \quad X \quad \text{and}
\end{equation}
\begin{equation}\label{preservinglocallipschitzconstant5}
\lip(v_{n,p}, B(x_0,r)) \leq \lip(u, B(x_0, r+\delta_p)) + \delta_p \leq \lip(u, B(x_0, r+ \delta_p)) + \frac{\varepsilon(p)}{4}
\end{equation}
for every ball $B(x_0,r)$ contained in $\Omega.$ We define the approximation $v: \Omega \to \R$ by
$$
v(x)= \sum_{(n,p) \in \N \times \Omega } v_{n,p}(x) \varphi_{n,p}(x), \quad x\in \Omega.
$$
By the properties of the partition $\lbrace \varphi_{n,p} \rbrace_{(n,p)\in \N \times \Omega},$ the function $v$ is well defined and is of class $C^k(\Omega).$ Given $x\in \Omega,$ \eqref{approximationgivenbylemma} implies
\begin{align*}
|u(x)-v(x)| &  \leq \sum_{ \lbrace (n,p) \: : \: B(p,\delta_p) \ni x \rbrace } | u(x)- v_{n,p}(x)| \: \varphi_{n,p}(x)  \leq \sum_{ \lbrace (n,p) \: : \: B(p,\delta_p) \ni x \rbrace } \frac{\varepsilon(p)}{2} \: \varphi_{n,p}(x) \\
& \leq \sum_{ \lbrace (n,p) \: : \: B(p,\delta_p) \ni x \rbrace }  \varepsilon(x) \:  \varphi_{n,p}(x) = \varepsilon(x).
\end{align*}

This proves part $(1)$ of our claim. Now, let us estimate $\|Dv(x)\|_*.$ Since $\sum_{(n,p)} \varphi_{n,p}=1,$ we have that $\sum_{(n,p)} D \varphi_{n,p}=0$ on $ \Omega.$ Then, taking into account that $\sop(\varphi_{n,p}) \subset B(p, \delta_p)$ for every $(n,p) \in \N \times \Omega,$ we can write
$$
Dv(x)= \sum_{ \lbrace (n,p) \: : \: B(p,\delta_p) \ni x \rbrace } D v_{n,p} (x) \varphi_{n,p} (x) + \sum_{ \lbrace (n,p) \: : \: B(p,\delta_p) \ni x \rbrace } (v_{n,p}(x)-u(x)) D \varphi_{n,p}(x).
$$
Hence, \eqref{approximationgivenbylemma} together with \eqref{locallyfiniteness} lead us to
\begin{align*}
\| Dv(x)\|_* & \leq \sum_{ \lbrace (n,p) \: : \: B(p,\delta_p) \ni x \rbrace } \|D v_{n,p} (x)\|_* \: \varphi_{n,p} (x) + \sum_{ \lbrace (n,p) \: : \: \varphi_{n,p}(x) \neq 0 \rbrace } \frac{\varepsilon(p)}{(1+\lip(\varphi_{n,p}))2^{n+2}} \|D \varphi_{n,p}(x)\|_* \\
& \leq \sum_{ \lbrace (n,p) \: : \: B(p,\delta_p) \ni x \rbrace } \|D v_{n,p} (x)\|_* \: \varphi_{n,p} (x) +\frac{\varepsilon(x)}{2} .
\end{align*}
Note that if $p \in \Omega$ is such that $x\in B(p,\delta_p),$ then $\varepsilon(x) \geq \varepsilon(p)/2 \geq 2 \delta_p$ and we can write, by virtue of \eqref{preservinglocallipschitzconstant5}, that
$$
\|Dv_{n,p}(x)\|_* \leq \lip( v_{n,p}, B(x, \varepsilon(x)-\delta_p)) \leq \lip( u, B(x, \varepsilon(x))) + \frac{\varepsilon(p)}{4} \leq \lip( u, B(x, \varepsilon(x))) + \frac{\varepsilon(x)}{2}.
$$
Therefore, we obtain
$$
\| Dv(x)\|_* \leq \sum_{ \lbrace (n,p) \: : \: B(p,\delta_p) \ni x \rbrace } \left( \lip( u, B(x, \varepsilon(x))) + \frac{\varepsilon(x)}{2} \right) \varphi_{n,p}(x) +\frac{\varepsilon(x)}{2} = \lip( u, B(x, \varepsilon(x))) + \varepsilon(x).
$$
This completes the proof of statement $(2).$ 

\end{proof}

\begin{claim}\label{propositionapproximationglobally1lipschitz}
Let $\Omega \subset X$ be an open subset and let $u: \Omega \to \R$ be a $K$-Lipschitz function with the property that $\lip(u,B) <K$ for every bounded subset $B$ of $\Omega.$ Then, given a continuous function $\varepsilon: \Omega \to (0,+ \infty),$ there exists $v: \Omega \to \R$ of class $C^k(\Omega)$ such that 

\item[] $(1)$ $|u(x)-v(x)| \leq \varepsilon(x)$ for every $x\in \Omega.$ 

\item[] $(2)$ $\| D v (x)\|_* <K$ for all $x\in \Omega.$ 
\end{claim}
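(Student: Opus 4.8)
The plan is to deduce this from Claim \ref{c0fineapproximation}: we apply it not with $\varepsilon$ itself but with a smaller, carefully chosen continuous function $\varepsilon'$, so that the estimate $\|Dv(x)\|_*\le\lip(u,B(x,\varepsilon'(x))\cap\Omega)+\varepsilon'(x)$ supplied by that claim stays \emph{strictly} below $K$ at every point. The key observation is that for each $x\in\Omega$ and each $r>0$ the set $B(x,r)\cap\Omega$ is a (nonempty) bounded subset of $\Omega$, so by hypothesis $\lip(u,B(x,r)\cap\Omega)<K$; thus only the organisation of these local estimates into one continuous, strictly positive radius function requires care.

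For each $p\in\Omega$ set $\Lambda_p:=\lip\bigl(u,B(p,2)\cap\Omega\bigr)$. Since $B(p,2)\cap\Omega$ is a nonempty bounded subset of $\Omega$, we have $\Lambda_p<K$, hence $c_p:=\tfrac12\min\{1,\,K-\Lambda_p\}>0$. The closed balls $\{B(p,1)\}_{p\in\Omega}$ cover $\Omega$, and since $\Omega$, as an open subset of a metric space, is paracompact, we may fix a locally finite continuous partition of unity $\{\psi_i\}_{i\in I}$ on $\Omega$ and, for each $i\in I$, a point $p_i\in\Omega$ with $\sop\psi_i\subset B(p_i,1)$. Then define
$$
\varepsilon'(x):=\min\Big\{\varepsilon(x),\ \sum_{i\in I}\psi_i(x)\,c_{p_i}\Big\},\qquad x\in\Omega.
$$
This $\varepsilon'$ is continuous (the sum is locally finite) and strictly positive (at each $x$ the second term is a convex combination of finitely many positive numbers $c_{p_i}$), and $\varepsilon'\le\varepsilon$ on $\Omega$.

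It remains to check the key inequality $\lip\bigl(u,B(x,\varepsilon'(x))\cap\Omega\bigr)+\varepsilon'(x)<K$ for every $x\in\Omega$. Given $x$, choose an index $i$ with $\psi_i(x)>0$ for which $c_{p_i}$ is maximal among such indices; then $\varepsilon'(x)\le\sum_j\psi_j(x)c_{p_j}\le c_{p_i}\le\tfrac12$, and $x\in\sop\psi_i\subset B(p_i,1)$, so $B(x,\varepsilon'(x))\subset B(p_i,2)$, whence by monotonicity of $B\mapsto\lip(u,B)$,
$$
\lip\bigl(u,B(x,\varepsilon'(x))\cap\Omega\bigr)+\varepsilon'(x)\ \le\ \Lambda_{p_i}+c_{p_i}\ \le\ \Lambda_{p_i}+\tfrac12(K-\Lambda_{p_i})\ =\ \tfrac12(K+\Lambda_{p_i})\ <\ K.
$$
Applying Claim \ref{c0fineapproximation} to $u$ with the continuous function $\varepsilon'$ yields $v\in C^k(\Omega)$ with $|u(x)-v(x)|\le\varepsilon'(x)\le\varepsilon(x)$ and $\|Dv(x)\|_*\le\lip\bigl(u,B(x,\varepsilon'(x))\cap\Omega\bigr)+\varepsilon'(x)<K$ for all $x\in\Omega$, which is exactly what is claimed. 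The only real subtlety is the one just handled: one cannot simply take the pointwise supremum of the admissible radii, as that need not be continuous (nor bounded below by a positive function); the partition of unity is used precisely to produce a continuous, strictly positive $\varepsilon'$ that near each point is dominated by one of the uniform local bounds $c_{p_i}$.
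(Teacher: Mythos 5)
Your proof is correct, and it implements the same high-level strategy as the paper---shrink $\varepsilon$ to a continuous, strictly positive gauge $\varepsilon'$ small enough that the estimate from Claim~\ref{c0fineapproximation}, namely $\|Dv(x)\|_*\le\lip(u,B(x,\varepsilon'(x))\cap\Omega)+\varepsilon'(x)$, stays strictly below $K$ at every point---but by a genuinely different mechanism. The paper builds $\varepsilon'$ radially: it sets $L(r)=\lip(u,B(0,r+1)\cap\Omega)$, notes that $\delta(r)=(K-L(r))/2$ is positive and \emph{nonincreasing}, and passes to the running average $\tilde\delta(t)=\int_t^{t+1}\delta(s)\,ds$, which is automatically continuous while still satisfying $\tilde\delta\le\delta$; then $\rho(x)=\tilde\delta(\|x\|)$ does the job, and the final bound $\|Dv(x)\|_*\le L(\|x\|)+\rho(x)\le\tfrac12(K+L(\|x\|))<K$ drops out. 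You instead use paracompactness of the metric space $\Omega$: you assign to each $p\in\Omega$ the admissible margin $c_p=\tfrac12\min\{1,K-\lip(u,B(p,2)\cap\Omega)\}>0$, take a locally finite continuous partition of unity subordinate to the cover by unit balls, and set $\varepsilon'=\min\{\varepsilon,\sum_i\psi_i\,c_{p_i}\}$; the pointwise bound then follows by choosing, among the active indices at $x$, one with maximal $c_{p_i}$, since a convex combination of the $c_{p_j}$ is dominated by the largest. Both routes are sound. The paper's is more elementary and yields an explicit formula, but it leans on the norm $\|\cdot\|$ as a proper, globally defined $1$-Lipschitz function providing a radial order on $\Omega$. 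Yours is more abstract and more portable---it would go through on any metric space admitting continuous locally finite partitions of unity---at the modest cost of invoking Stone's theorem. One small point to make explicit in a write-up: the partition of unity should be taken subordinate to the cover by \emph{open} unit balls, so that $\sop\psi_i\subset B(p_i,1)$ (and hence $x\in B(p_i,1)$ whenever $\psi_i(x)>0$) is actually guaranteed.
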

\begin{proof}
Let us define $L(r) = \lip(u, B(0, r+1) \cap \Omega)$ for every $r>0.$ The function given by $\delta(r)= \frac{K-L(r)}{2},$ for every $r \geq 0,$ is positive and nonincreasing. The function $\tilde{\delta} : [0,+ \infty) \to \R$ given by
$$
\tilde{\delta}(t) = \int_{t}^{t+1} \delta(s) ds, \quad t\geq 0,
$$
is continuous and satisfies $\tilde{\delta} \left( [0,+ \infty) \right) \subset (0,K)$ and $\tilde{\delta} \leq \delta$ on $[0,+\infty).$ Let us define the mapping $ \rho : \Omega \to (0,+\infty)$ by $\rho(x)= \tilde{\delta}( \| x\|)$ for every $x\in \Omega.$ Then $\rho$ is continuous and we can replace $\varepsilon$ by $ \min \lbrace  1, \varepsilon,\rho, \frac{1}{2}\dist(\cdot, \partial \Omega) \rbrace$ on $\Omega.$ In particular, this implies that $B(x,\varepsilon(x)) \subset \Omega$ for every $x\in \Omega.$ We thus have from Claim \ref{c0fineapproximation} that there exists $v \in C^k(\Omega)$ such that 
$$
|u(x)-v(x)| \leq \varepsilon(x), \quad x\in \Omega,
$$
and
$$
\| Dv(x)\|_* \leq \lip(u, B(x, \varepsilon(x)) ) + \varepsilon(x), \quad x\in \Omega.
$$
Since $\varepsilon \leq 1,$ the ball $B(x, \varepsilon(x))$ is contained in $B(0, \| x\| + 1) \cap \Omega.$ Hence, the last inequality leads us to
$$
\| Dv(x)\|_* \leq L( \|x\| )  + \varepsilon(x) \leq L( \|x\| )  + \rho(x) \leq \frac{K+L( \|x\|)}{2}
$$
for every $x\in \Omega.$ This shows that $\| Dv(x)\|_*<K$ on $\Omega.$ 
\end{proof}

\medskip

We are now ready to prove Theorem \ref{generaltheorem}.

\begin{proof}[Proof of Theorem \ref{generaltheorem}]
Assume that $X$ satisfies the hypothesis of Theorem \ref{generaltheorem} for some $k\in \N \cup \lbrace \infty \rbrace.$ Let us denote by $\lambda_0$ and $K$ the Lipschitz constants $\lip(u_0, \partial \Omega)$ and $\lip(u_0, \overline{\Omega})$ of $u_0$ on $\partial \Omega$ and $\overline{\Omega}$ respectively. By Theorem \ref{theoremglobalapproximation}, there exists a function $u : \overline{\Omega} \to \R$ with 
\begin{equation}\label{estimationauxiliarfunction1}
|u_0-u| \leq \varepsilon/2 \quad \text{on} \quad \overline{\Omega} , \quad u=u_0 \quad \text{on} \quad \partial \Omega,
\end{equation}
and the Lipschitz constant of $u$ on every bounded subset of $\overline{\Omega}$ is strictly smaller than $K.$ Now, applying Claim \ref{propositionapproximationglobally1lipschitz} for $u,$ we can find a function $v: \Omega \to \R$ of class $C^k(\Omega)$ such that 
\begin{equation}\label{estimationauxiliarfunction2}
|u(x)-v(x)| \leq \min \left\lbrace \frac{\varepsilon}{2}, \dist( x, \partial \Omega) \right\rbrace \quad \text{and} \quad \| Dv(x)\|_* <K \quad \text{for all} \quad x\in \Omega.
\end{equation}
If we extend $v$ to the boundary $\partial \Omega$ of $\Omega$ by setting $v= u$ on $\partial \Omega$ and we use the inequality \eqref{estimationauxiliarfunction2}, we obtain, for every $x\in \partial \Omega, \: y\in \Omega,$ that
$$
|v(x)-v(y)| \leq |u(x)-u(y)| + |u(y)-v(y)| \leq K \|x-y\| + \dist(y, \partial \Omega) \leq (1+K) \|x-y\|.
$$
This proves that the function $v$ is continuous on $\overline{\Omega}.$ Therefore, the fact that $v$ is $K$-Lipschitz on $\overline{\Omega}$ is a consequence of the following well-known fact.

\begin{fact}\label{factlipschitzconstant}
{\em
If $w: \overline{\Omega} \to \R$ is continuous on $\overline{\Omega}$, is differentiable on $\Omega,$ is $K$-Lipschitz on $\partial \Omega$ and satisfies $\| Dw(x)\|_* \leq K$ for every $x\in \Omega,$ then $w$ is $K$-Lipschitz on $\overline{\Omega}.$ }
\end{fact}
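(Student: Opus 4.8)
The plan is to reduce the claim to a one-dimensional estimate along straight segments, using the mean value theorem where the segment lies inside $\Omega$ and the hypothesis where it meets $\partial\Omega$. Fix $x,y\in\overline\Omega$ with $x\neq y$, put $\sigma(t)=(1-t)x+ty$ for $t\in[0,1]$ and $h=w\circ\sigma$; it suffices to prove $|h(0)-h(1)|\le K\|x-y\|$. The obstruction is that $[x,y]$ need not lie in $\overline\Omega$, so $Dw$ cannot be integrated along it; instead one dissects $[0,1]$ according to whether $\sigma(t)$ lies in $\Omega$, on $\partial\Omega$, or outside $\overline\Omega$.

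First I would isolate the two elementary facts that drive everything. If a compact segment $[p,q]$ is contained in $\Omega$, then $t\mapsto w((1-t)p+tq)$ is differentiable on $[0,1]$ with derivative of absolute value at most $K\|p-q\|$ (since $\|Dw(z)\|_*\le K$ on $\Omega$), hence $|w(p)-w(q)|\le K\|p-q\|$ by the one-variable mean value theorem; and $w$ is $K$-Lipschitz on $\partial\Omega$ by hypothesis. Now set $T=\{t\in[0,1]:\sigma(t)\in\overline\Omega\}$ (closed, containing $0$ and $1$), $V=\{t\in[0,1]:\sigma(t)\in\Omega\}$ (open in $[0,1]$), and $P=\{t\in[0,1]:\sigma(t)\in\partial\Omega\}$ (closed), so that $[0,1]\setminus P$ is the disjoint union of $V$ and the open set $[0,1]\setminus T$, whose connected components I call the gaps, and so that the connected components of $[0,1]\setminus P$ are exactly the components of $V$ together with the gaps. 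A short point-set argument shows that every endpoint of a gap, and every endpoint of a component of $V$ other than $0$ and $1$, belongs to $P$: such a point belongs to $\overline\Omega$, is approached by points of $\Omega$ or by points outside $\overline\Omega$, and is not in $\Omega$ (as $\Omega$ is open), hence lies on $\partial\Omega$. Since $\sigma$ is affine, it follows that $h$ is $K\|x-y\|$-Lipschitz on $P$, and that on the closure of each component of $V$, $h$ is $K\|x-y\|$-Lipschitz -- first on compact subsegments of the component by the mean value theorem, then on the closure because $h=w\circ\sigma$ is continuous on $T$ (as $w$ is continuous on $\overline\Omega$).

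Next I would patch these estimates together. Define $H:[0,1]\to\R$ by $H=h$ on $T$ and, on each gap $(c,d)$, by affine interpolation between $h(c)$ and $h(d)$; since $0,1\in T$ we have $H(0)=w(x)$ and $H(1)=w(y)$. The claim is that $H$ is $K\|x-y\|$-Lipschitz on $[0,1]$, which yields $|w(x)-w(y)|\le K\|x-y\|$ and finishes the proof. To prove the claim, take $s<t$ in $[0,1]$ and argue by cases on the positions of $s$ and $t$ relative to the components of $[0,1]\setminus P$. If $s,t$ lie in a common gap, $H$ is affine between them with slope of absolute value at most $K\|x-y\|$, because the gap's endpoints lie in $P$. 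If $s,t$ lie in the closure of a common component of $V$, use the previous paragraph. If $s\in P$ and $t\in P$, the bound is immediate from the $P$-estimate. If $s\in P$ and $t\notin P$, let $a$ be the left endpoint of the component of $[0,1]\setminus P$ containing $t$; then $a\in P$ and $s\le a$, so $|H(s)-H(a)|\le K\|x-y\|(a-s)$ by the $P$-estimate while $|H(a)-H(t)|\le K\|x-y\|(t-a)$ by one of the first two cases, and the triangle inequality gives $|H(s)-H(t)|\le K\|x-y\|(t-s)$; the case $t\in P$, $s\notin P$ is symmetric. Finally, if $s,t\notin P$ lie in distinct components of $[0,1]\setminus P$, then $[s,t]$ meets $P$ (since $[0,1]\setminus P$ is open), so there is $\tau\in P\cap(s,t)$, and we conclude by applying the preceding case to $[s,\tau]$ and to $[\tau,t]$. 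The components of $V$ containing $0$ or $1$ -- which occur exactly when $x$ or $y$ lies in $\Omega$ -- are treated with the obvious modifications and only simplify matters.

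The step I expect to be the main obstacle is the set $P$, where the segment may run along $\partial\Omega$ on a positive-measure, Cantor-like set; one cannot add up contributions over finitely many subintervals, nor integrate the a.e.\ bound $|H'|\le K\|x-y\|$, since $H$ is only known to be continuous. Interpolating linearly across the gaps and then running the purely pointwise case analysis above is precisely what circumvents this. The remaining items -- the elementary topological facts about endpoints lying in $P$, the passage to closures of components of $V$, and the endpoint bookkeeping in the last two cases -- are routine.
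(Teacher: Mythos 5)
Your proof is correct. Note that the paper itself offers no argument for this Fact --- it is invoked as ``well-known'' inside the proof of Theorem \ref{generaltheorem} --- so there is no proof of record to compare yours against. Your segment argument is the natural one and is carried out completely: the decomposition of $[0,1]$ into $P=\sigma^{-1}(\partial\Omega)$, the components of $\sigma^{-1}(\Omega)$, and the gaps outside $\overline{\Omega}$, with the mean value theorem on the interior pieces, the boundary Lipschitz hypothesis on $P$, and affine interpolation across the gaps, correctly handles the only delicate point, namely that the segment may leave $\overline{\Omega}$ and may meet $\partial\Omega$ in a set with empty interior, so that neither a finite subdivision of $[x,y]$ nor integration of $Dw$ along it is available; the purely pointwise case analysis for the patched function $H$ closes the argument.
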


It only remains to see that $v$ is $\varepsilon$-close to $u_0.$ Indeed, by using \eqref{estimationauxiliarfunction1} and \eqref{estimationauxiliarfunction2} we obtain 
$$
|u_0-v| \leq |u_0-u| + |u-v| \leq \frac{\varepsilon}{2} + \frac{\varepsilon}{2} = \varepsilon \quad \text{on} \quad \overline{\Omega}.
$$
\end{proof}

\subsection{Finite dimensional and Hilbert spaces}

We are now going to prove that if $X$ is a finite dimensional space or a Hilbert space, then $X$ satisfies the assumption of Theorem \ref{generaltheorem} with $k=\infty$ in the separable case and with $k=1$ in the non-separable case. 

\begin{lemma}\label{lemmapartialsmoothapproximation}
Let $X$ be a separable Hilbert space or a finite dimensional normed space. Given a $K$-Lipschitz function $f: X \to \R$ and $\varepsilon >0,$ there exists a function $g$ of class $C^\infty(X)$ such that $|g-f| \leq \varepsilon$ on $X$ and $\lip(g, B(x_0,r))  \leq \lip(f, B(x_0,r+ \varepsilon))+ \varepsilon$ for every ball $B(x_0,r) \subset X.$ On the other hand, if $X$ is a non-separable Hilbert space, the statement holds replacing $C^\infty$ smoothness with $C^1.$ 
\end{lemma}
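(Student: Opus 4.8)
The plan is to establish the standard mollification estimate in the Euclidean / finite-dimensional case and then transfer it to separable infinite-dimensional Hilbert space by a classical ``integral convolution against a sum of bump functions supported on a locally finite cover'' device, finally handling the non-separable case by reducing to separable subspaces.

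First I would treat the finite-dimensional case, which is the engine of everything. Fix a finite-dimensional normed space $X$; identify it with $\R^d$ and fix an auxiliary Euclidean structure, letting $\|\cdot\|$ be the given norm and $|\cdot|$ the Euclidean one, so $c_1|x|\le\|x\|\le c_2|x|$. Let $\theta\in C^\infty(\R^d)$ be a standard nonnegative mollifier supported in the Euclidean unit ball with $\int\theta=1$, and for $t>0$ put $g=g_t:=f*\theta_t$ where $\theta_t(x)=t^{-d}\theta(x/t)$. Then $g\in C^\infty$, and $\|g-f\|_\infty\le \sup_{|z|\le t}\|f(\cdot-z)-f\|_\infty\le K c_2^{-1}c_1^{-1}\cdot(\text{something})\,t$; more carefully, since $f$ is $K$-Lipschitz for $\|\cdot\|$ it is $(K c_2)$-Lipschitz for $|\cdot|$, hence $|g(x)-f(x)|\le \int\theta_t(z)|f(x-z)-f(x)|\,dz\le Kc_2 t$. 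So choosing $t$ small enough we get $|g-f|\le\varepsilon$. For the Lipschitz estimate on a ball $B(x_0,r)$ (in the norm $\|\cdot\|$): for $x,y\in B(x_0,r)$,
\begin{equation*}
g(x)-g(y)=\int \theta_t(z)\,(f(x-z)-f(y-z))\,dz,
\end{equation*}
and for each $z$ in the support of $\theta_t$ the points $x-z,y-z$ lie in the enlarged ball $B(x_0,r+ c t)$ for a suitable constant $c$ (coming from $|z|\le t$ and the norm comparison); hence $|g(x)-g(y)|\le \lip(f,B(x_0,r+ct))\,\|x-y\|$. Taking $t$ small enough that $ct\le\varepsilon$ and $Kc_2 t\le\varepsilon$ (and noting $\lip(g,B)\le\lip(f,B^{+})\le\lip(f,B^{+})+\varepsilon$) finishes the finite-dimensional case. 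The only mildly delicate point is getting the \emph{same} normed-space radius enlargement by $\varepsilon$ rather than by a norm-dependent constant times $\varepsilon$; this is harmless because one simply absorbs $c$ into the choice of $t$.

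Next I would do the separable Hilbert case. Here direct convolution is unavailable, so I follow the scheme already used in the proof of Claim \ref{c0fineapproximation}: cover $X$ by a locally finite family of balls $B(p_j,s_j)$ on which $f$ is nearly affine / nearly its Lipschitz constant cannot hurt us, take a $C^\infty$ Lipschitz partition of unity $\{\varphi_j\}$ subordinate to a suitable refinement (this exists in separable Hilbert space by \cite[Lemma 3.6]{MJSLSG}, as invoked in the excerpt), and on each piece replace $f$ by a smooth function $g_j$ that is $\eta$-close to $f$ and has $\lip(g_j,B(x_0,r))\le\lip(f,B(x_0,r+\eta))+\eta$ on \emph{every} ball. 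To produce such $g_j$ locally one restricts to a finite-dimensional subspace spanned by a dense grid of directions, or more simply one uses that on a separable Hilbert space one has a $C^\infty$ Lipschitz ``smoothing operator'' that moves the sup norm by at most $\eta$ at the cost of multiplying the Lipschitz constant by a constant $C_0$; then one runs the finite-dimensional mollification on finite-dimensional slices and glues. Assembling $g:=\sum_j g_j\varphi_j$ and estimating $Dg=\sum_j Dg_j\varphi_j+\sum_j(g_j-f)D\varphi_j$ exactly as in Claim \ref{c0fineapproximation} — choosing the local error tolerances $\eta_j$ small relative to $2^{-j}(1+\lip\varphi_j)^{-1}$ — yields $|g-f|\le\varepsilon$ and the required ball-wise Lipschitz bound, since on any fixed ball only finitely many $\varphi_j$ are nonzero and each contributes at most $\lip(f,B^{+\varepsilon})+\varepsilon$ while the derivative-of-partition terms contribute at most $\varepsilon$. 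The honest statement is that this step is essentially a repackaging of known results on smooth approximation of Lipschitz functions in Hilbert space (as in \cite{AFLR}), localized ball-by-ball; I would cite those and spell out only the bookkeeping.

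Finally, the non-separable Hilbert case with $C^1$ in place of $C^\infty$: given $f:X\to\R$ $K$-Lipschitz and $\eta>0$, I would not smooth $X$ globally but instead use the fact that $f$ depends, up to arbitrarily small error, on countably many coordinates in a suitable sense — more precisely, one uses a $C^1$ partition of unity on $X$ (available on any Hilbert space, since the norm squared is $C^\infty$ away from $0$ and Hilbert spaces admit $C^1$ — indeed $C^\infty$ — partitions of unity, though for the non-separable case only $C^1$ locally finite ones combined with the metric structure are needed) subordinate to a cover by balls, and on each ball reduce to a separable (even finite-dimensional) subspace by projecting, applying the separable result there, and lifting back via the orthogonal projection, which does not increase Lipschitz constants and changes radii controllably. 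Then glue with the $C^1$ partition of unity and estimate the derivative just as before. The main obstacle — and the reason only $C^1$ survives in the non-separable setting — is that non-separable Hilbert spaces need not admit $C^\infty$ (or even $C^2$) Lipschitz partitions of unity with the fine covering properties used above, whereas $C^1$ ones built from the norm do exist; so the whole argument must be arranged to use only $C^1$ bump functions, and the orthogonal-projection reduction is what lets one import the separable smoothing while keeping Lipschitz constants under control. I would flag this partition-of-unity/projection step as the crux and the rest as routine propagation of the estimates from Claim \ref{c0fineapproximation}.
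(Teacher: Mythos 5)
Your finite-dimensional argument is essentially the paper's: mollify by convolution and observe that the local Lipschitz constant only sees an enlarged ball. (You over-engineer slightly by fixing an auxiliary Euclidean structure; the paper just takes a mollifier supported in a $\delta$-ball of the given norm, so no comparison constants appear and the radius enlargement is exactly $\delta$.)

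The Hilbert-space part, however, has a genuine gap and in fact goes in a circle. The ball-wise Lipschitz estimate you want, namely $\lip(g,B(x_0,r))\le\lip(f,B(x_0,r+\varepsilon))+\varepsilon$, is precisely the hypothesis on $X$ in Theorem \ref{generaltheorem}, and the partition-of-unity machinery of Claim \ref{c0fineapproximation} is built \emph{on top} of that hypothesis: to run it you would already need, on each piece, a local smooth approximant $g_j$ satisfying the very inequality you are trying to establish. Your two suggested escape routes do not close the loop. Restricting to a finite-dimensional slice $V$ and lifting back by $g\circ P_V$ destroys $C^0$-closeness: $|f(x)-f(P_Vx)|$ can be of order $K\|x-P_Vx\|$, which is uncontrolled, so $g\circ P_V$ is not near $f$ in sup norm. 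And a ``smoothing operator'' that only controls the \emph{global} Lipschitz constant up to a factor $C_0>1$ gives $\lip(g,X)\le C_0\lip(f,X)$, which is much weaker than the additive, ball-localized estimate needed (indeed the whole point of the paper is that such multiplicative bounds are not enough). The idea you are missing is the Lasry--Lions sup--inf convolution $f_\lambda^\mu$: the paper first shows (by a direct computation with the Moreau envelopes $g_\lambda$ and $g^\mu$) that $\lip(f_\lambda^\mu,B(x_0,r))\le\lip(f,B(x_0,r+2(\lambda+\mu)K))$ for every ball, then uses the known facts that $f_\lambda^\mu\in C^1(X)$ and $f_\lambda^\mu\to f$ uniformly as $0<\mu<\lambda\to0$. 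This works in \emph{any} Hilbert space, separable or not, and already yields the $C^1$ statement; in the separable case one then invokes Moulis' theorem to replace $f_\lambda^\mu$ by a $C^\infty$ function $g$ with $\|Dg-Df_\lambda^\mu\|_*\le\varepsilon$ uniformly, which preserves the ball-wise estimate up to $+\varepsilon$.

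One further inaccuracy: your explanation of why only $C^1$ survives in the non-separable case is not the right one. Non-separable Hilbert spaces \emph{do} admit $C^\infty$ partitions of unity (the squared norm is $C^\infty$, and Toru\'nczyk's results give smooth partitions of unity). The actual obstruction is that Moulis' $C^\infty$ approximation theorem with uniform control of derivatives, used to upgrade the Lasry--Lions $C^1$ regularization, is only available on separable spaces.
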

\begin{proof}
Let us first consider that $X=\R^d$ is endowed with an arbitrary norm. If $f: \R^d \to \R$ is Lipschitz and we consider a function $\theta_\delta : \R^d \to \R$ of class $C^\infty(\R^d)$ with $\sop(\theta_\delta) \subseteq B(0,\delta)$ and $\int_{\R^d} \theta_\delta = 1,$ it is well known that the integral convolution $ f_\delta = f* \theta_\delta$ is a Lipschitz function of class $C^\infty$ such that
$$
\lip(f_\delta, S) \leq \lip(f, S+ B(0,\delta)) \quad \text{for every subset} \quad S \subset \R^d.
$$
In addition, $f_\delta \to f$ uniformly on $\R^d$ as $\delta \to 0^+.$ This proves the lemma in the finite dimensional case.

\medskip

Now, let $X$ be a Hilbert space and let us denote by $\| \cdot \|$ the norm on $X.$ If $g: X \to \R$ is a $K$-Lipschitz function, then the functions defined by
$$
g_\lambda(x)= \inf_{y\in X} \lbrace f(y) + \tfrac{1}{2\lambda}\| x-y\|^2 \rbrace, \quad g^\mu(x)= \sup_{y\in X} \lbrace f(y) - \tfrac{1}{2\mu}\| x-y\|^2 \rbrace
$$
for all $x\in X$ and $ \lambda, \mu >0,$ are $K$-Lipschitz as well. Also, it is easy to see that the infimum/supremum defining $g_\lambda(x)$ and $g^\mu(x)$ can be restricted to the ball $B(x, 2 \lambda K)$ and $B(x,2 \mu K)$ respectively. Let us now prove the following relation between the local Lipschitz constants of $g$ and $g_\lambda:$
\begin{equation}\label{preservinglocallipschitzconstant1}
\lip(g_\lambda , B(x_0,r)) \leq \lip(g, B(x_0,r+2\lambda K)) \quad \text{for every ball} \quad B(x_0,r) \subset X.
\end{equation}
Indeed, let us fix a ball $B(x_0,r),$ two points $x,x'\in B(x_0,r)$ and $\varepsilon >0.$ We can find $y \in B(x', 2 \lambda K)$ such that
$$
g(y)+\tfrac{1}{2\lambda}\| x'-y\|^2 \leq g_\lambda(x') + \varepsilon.
$$
The points $y$ and $x-x'+y$ belong to $B(x_0,r+ 2\lambda K)$ and then we can write
\begin{align*}
g_\lambda(x)-g_\lambda(x') & \leq g(x-x'+y) + \tfrac{1}{2\lambda}\| x-(x-x'+y)\|^2-g(y)\\
& \quad - \tfrac{1}{2\lambda} \| x'-y\|^2+ \varepsilon \leq \lip(g, B(x_0,r+2\lambda K)) \| x-x'\| + \varepsilon,
\end{align*}
which easily implies \eqref{preservinglocallipschitzconstant1}. Similarly, we show that 
\begin{equation}\label{preservinglocallipschitzconstant2}
\lip(g^\mu , B(x_0,r)) \leq \lip(g, B(x_0,r+2\mu K)) \quad \text{for every ball} \quad B(x_0,r) \subset X.
\end{equation}
Now, we consider the Lasry-Lions sup-inf convolution formula for $g,$ that is
$$
g_\lambda^\mu(x)= \sup_{z\in X} \inf_{y\in X} \lbrace f(y)+ \tfrac{1}{2\lambda} \| z-y\|^2- \tfrac{1}{2 \mu} \| x-z\|^2 \rbrace 
$$
for all $x\in X$ and $0 < \mu < \lambda.$ 
By the preceding remarks, the function $g_\lambda^\mu$ is $K$-Lipschitz and satisfies that
\begin{equation}\label{preservinglocallipschitzconstant3}
\lip(g_\lambda^\mu , B(x_0,r)) \leq \lip(g, B(x_0,r+2(\lambda + \mu) K)) \quad \text{for every ball }  B(x_0,r) \subset X.
\end{equation}
Moreover, in \cite{LasryLions, ATAZ} it is proved that $g_\lambda^\mu$ is of class $C^1(X)$ and $g_\lambda^\mu$ converges uniformly to $g$ as $0 < \mu < \lambda \to 0.$ Now, given our $K$-Lipschitz function $f: X \to \R$ and $\varepsilon >0,$ we can find $0 < \mu < \lambda$ small enough so that the function $f_\lambda^\mu$ is $K$-Lipschitz and of class $C^1(X), \: | f_\lambda^\mu-f| \leq \varepsilon/2$ on $X$ and, by virtue of \eqref{preservinglocallipschitzconstant3},
\begin{equation}\label{preservinglocallipschitzconstant4}
\lip(f_\lambda^\mu, B(x_0,r)) \leq \lip(f, B(x_0,r+ \varepsilon))\quad \text{for every ball }  B(x_0,r) \subset X.
\end{equation}
If we further assume that $X$ is separable, then we can use  \cite[Theorem 1]{Moulis} in order to obtain a function $g \in C^\infty(X)$ such that
$$
| f_\lambda^\mu - g| \leq \frac{\varepsilon}{2} \quad \text{and} \quad \| D f_\lambda^\mu - Dg\|_* \leq \varepsilon \quad \text{on} \quad X,
$$
where $\| \cdot \|_*$ denotes the dual norm of $\| \cdot \|.$ From the first inequality we see that $| f- g| \leq \varepsilon$ on $X.$ The second one together with \eqref{preservinglocallipschitzconstant4} shows that
$$
\lip(g,B(x_0,r)) \leq \lip(f_\lambda^\mu, B(x_0,r))+ \varepsilon \leq \lip(f, B(x_0,r+ \varepsilon))+ \varepsilon 
$$
for every ball $B(x_0,r)$ of $X.$ 
\end{proof}

Combining Lemma \ref{lemmapartialsmoothapproximation} with Theorem \ref{generaltheorem}, we obtain Theorem \ref{maintheoremnonseparablehilbert} and Theorem \ref{secondmaintheorem} when $X$ is a separable Hilbert space or a finite dimensional space.

\begin{remark}
\em{In the case when the function to be approximated vanishes on the boundary, the proof of Theorem \ref{secondmaintheorem} for finite dimensional spaces can be very much simplified as we do not need to use Theorem \ref{theoremglobalapproximation}. Indeed, if $\R^n$ is endowed with an arbitrary norm and $u_0: \overline{\Omega} \to \R$ is a Lipschitz function with $u_0=0$ on $\partial \Omega,$ given $\varepsilon >0,$ we define the function $\varphi_\varepsilon : \R \to \R$ by 
\begin{equation}\label{equationfunctioncomposition}
\varphi_\varepsilon(t)   =  \left\lbrace
	\begin{array}{ccl}
	t+\frac{\varepsilon}{2} & \mbox{if } & t \leq - \frac{\varepsilon}{2}, \\
	0  & \mbox{if }&  -\frac{\varepsilon}{2}\leq t\leq \frac{\varepsilon}{2}, \\
	t-\frac{\varepsilon}{2} & \mbox{if }& t \geq  \frac{\varepsilon}{2}.
	\end{array}
	\right.
\end{equation}
We can assume that $u_0$ is extended to all of $\R^n$ by putting $u_0=0$ on $\R^n \setminus \overline{\Omega},$ preserving the Lipschitz constant. The function $u = \varphi_\varepsilon \circ u_0$ defined on $\R^n$ is Lipschitz because so are $u_0$ and $\varphi_\varepsilon$, and $\lip(u,\R^n) \leq \lip(u_0, \R^n).$ Also, since $| \varphi_\varepsilon(t)-t| \leq \varepsilon/2$ for every $t\in \R,$ it is clear that
$$
| u(x)-u_0(x)| = | \varphi_\varepsilon(u_0(x))- u_0(x)| \leq \frac{\varepsilon}{2} \quad \text{for all} \quad x\in \R^d.
$$
Now we define
$$
v(x)=(u * \theta_\delta)(x)= \int_{\R^d} u(y) \theta_\delta(x-y) dy, \quad x\in \R^d,
$$
where $\theta_\delta : \R^d \to \R$ is a $C^\infty(\R^d)$ such that $\theta_\delta \geq 0, \: \int_{\R^d}\theta_\delta =1$ and $\sop(\theta_\delta) \subseteq B(0,\delta).$ Using the preceding remarks together with the well-known properties of the integral convolution of Lipschitz functions with mollifiers, it is straightforward to check that, for $\delta>0$ small enough, $v$ is the desired approximating function, i.e, $v$ is of class $C^\infty(\R^d)$ with $v=0$ on $\partial \Omega, \: \lip(v, \R^n) \leq \lip(u_0, \R^n)$ and $|u_0-v| \leq \varepsilon$ on $\overline{\Omega}.$ 

}
\end{remark}

\subsection{The space $c_0(\Gamma)$}

Let us now prove that the space $X= c_0(\Gamma)$ satisfies the hypothesis of Theorem \ref{generaltheorem} with $k=\infty.$ In order to do this, we will use the construction given in \cite[Theorem 1]{HajekJohanis2} and we will observe that the local Lipschitz constants are preserved. 

\begin{lemma}\label{lemmapartialc0}
If $\Gamma$ is an arbitrary subset, $X=c_0(\Gamma)$ and $f: X \to \R$ is a Lipschitz function, then, for every $\varepsilon >0,$ there exists a function $g: X \to \R$ of class $C^\infty(X)$ such that $|f-g| \leq \varepsilon$ on $X$ and $\lip(g,B(x_0,r)) \leq \lip( f, B(x_0, r+ \varepsilon))$ for every ball $B(x_0,r) \subset X.$
\end{lemma}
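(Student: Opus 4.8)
The plan is to revisit the construction of $g$ given in \cite[Theorem 1]{HajekJohanis2} and to check that, in addition to producing a $C^\infty$ function that is $\varepsilon$-close to $f$, it respects local Lipschitz constants up to the radial enlargement $r \mapsto r+\varepsilon$. Recall that the feature of $c_0(\Gamma)$ exploited there is, roughly, twofold. First, for each $\alpha>0$ there is a $C^\infty$, $1$-Lipschitz self-map $S_\alpha$ of $c_0(\Gamma)$, defined coordinatewise by $(S_\alpha x)_\gamma = \varphi_\alpha(x_\gamma)$ for a suitable $\varphi_\alpha\in C^\infty(\R)$ that vanishes on a neighbourhood of $0$, has $0\le\varphi_\alpha'\le 1$, and satisfies $\sup_{t}|\varphi_\alpha(t)-t|\le\alpha$; then $\|S_\alpha x-x\|\le\alpha$ for all $x$, and near any fixed point only the \emph{finite} set of coordinates $\gamma$ with $|x_\gamma|\ge\alpha/2$ is relevant to $S_\alpha$, so $f\circ S_\alpha$ is, locally, a Lipschitz function of finitely many variables. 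Second, smooth suprema and infima of locally uniformly finite families are available on $c_0(\Gamma)$, which permits one to glue local finite-dimensional smoothings without deteriorating Lipschitz constants.

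Concretely, I would fix small parameters $\alpha,\rho>0$ (eventually with $\alpha+\rho\le\varepsilon$, and $\alpha$ also small enough that the errors below are each at most $\varepsilon/2$). Composing with $S_\alpha$ replaces $f$ by $f\circ S_\alpha$, which is $\lip(f)$-Lipschitz, satisfies $|f\circ S_\alpha - f|\le\lip(f)\,\alpha$ on $X$, and, because $S_\alpha$ is $1$-Lipschitz and moves points by at most $\alpha$, obeys $\lip(f\circ S_\alpha, B(x_0,r))\le\lip(f, B(x_0,r+\alpha))$ for every ball. One then smooths the local finite-dimensional traces of $f\circ S_\alpha$ by the usual integral convolution against a $C^\infty$ kernel of radius $\rho$, which again enlarges the relevant ball by at most $\rho$ and never increases a Lipschitz constant, and assembles the resulting pieces by the smooth sup/inf operations of $c_0(\Gamma)$, using the locally finite combinatorial structure of the coordinate sets $\{\gamma:|x_\gamma|\ge\alpha/2\}$, exactly as in \cite[Theorem 1]{HajekJohanis2}. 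Tracking all the estimates through this assembly yields $|f-g|\le\varepsilon$ on $X$ and $\lip(g,B(x_0,r))\le\lip(f,B(x_0,r+\alpha+\rho))\le\lip(f,B(x_0,r+\varepsilon))$ for every ball $B(x_0,r)\subset X$.

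The main obstacle is the gluing step. An unadorned $C^\infty$ partition-of-unity argument would contribute cross terms $\sum(f_i-f)\,D\psi_i$ to $Dg$, and since $c_0(\Gamma)^*=\ell_1(\Gamma)$ these cannot be absorbed into an arbitrarily small error without a multiplicative loss; this is why the analogous statements in more general spaces carry a constant $C_0\ge 1$. The point of going back to \cite[Theorem 1]{HajekJohanis2} is precisely that its assembly is organized (via coordinatewise smoothing and smooth maxima/minima over a locally uniformly finite family, rather than a naive partition of unity) so that no such loss occurs. All that needs to be added to the original argument is the elementary observation that each ingredient used there — composition with the $1$-Lipschitz map $S_\alpha$, finite-dimensional mollification, and the smooth $\max/\min$ operation — preserves, for every ball, the local Lipschitz constant up to a radial enlargement, and that these enlargements add up to at most $\varepsilon$.
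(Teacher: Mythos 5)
Your proposal follows essentially the same route as the paper: compose $f$ with a coordinatewise truncation of $c_0(\Gamma)$ (a $1$-Lipschitz map moving points at most $\alpha$ and forcing local dependence on finitely many coordinates), then smooth via convolutions along finitely many coordinates as in \cite[Theorem 1]{HajekJohanis2}, and check that each step enlarges the relevant ball only additively, so that $\lip(g,B(x_0,r))\leq \lip(f,B(x_0,r+\varepsilon))$; your error bookkeeping ($\alpha+\rho\leq\varepsilon$, each uniform error $\leq\varepsilon/2$) is exactly the paper's. One correction to your description of the assembly step: the construction of \cite{HajekJohanis2} that the paper invokes does not glue local mollifications by smooth sup/inf operations; it takes the net $\lbrace g_F\rbrace$ of partial convolutions of $h=f\circ\phi$ over finite coordinate sets $F$, and the local finite dependence produced by the truncation makes this net stabilize locally (near each $x$ one has $g=g_H$ for every finite $H\supset F_x$), so no gluing at all --- and hence no partition-of-unity or smooth-max loss --- ever occurs. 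If you really assembled by smoothed maxima/minima you would still owe a verification that this operation does not increase Lipschitz constants, which your sketch leaves open; deferring instead to the net-of-convolutions construction, as you ultimately do, gives the two-point estimate $|g(x)-g(y)|=|g_H(x)-g_H(y)|\leq\lip(h,B(x_0,r+c\varepsilon))\,\|x-y\|$ directly, which is precisely how the paper concludes.
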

\begin{proof}
If $K$ denotes the Lipschitz constant of $f,$ let us consider $0 < \eta <\frac{\varepsilon}{2(1+K)}.$ Let us define the function $\phi: X \to X$ by $\phi(x) = \left(  \varphi_{2\eta}(x_\gamma) \right)_{\gamma \in \Gamma}$ for every $x=(x_\gamma)_{\gamma \in \Gamma} \in X,$ where $\varphi_{2\eta}$ is defined in \eqref{equationfunctioncomposition}. Thus $\phi$ is $1$-Lipschitz and satisfies $\| \phi(x)-x\| \leq \eta$ for every $x\in X.$ By composing $f$ with $\phi$ we obtain a function $h = f \circ \phi$ satisfying $|f-h| \leq \frac{\varepsilon}{2}$ and with the property that, for every $x\in X,$ there exists a finite subset $F$ of $\Gamma$ such that whenever $y,y'\in B(x, \frac{\eta}{2})$ and $P_F(y)=P_F(y')$ (here $P_F(z) = \sum_{\gamma \in F} e^*_\gamma(z) e_\gamma$ for every $z\in X$) we have $h(y)=h(y').$ Moreover, we observe that if $x,y\in B(x_0,r) \subset X,$ then $\phi(x), \phi(y) \in B(x_0, r+\eta)$ and therefore
$$
|h(x)-h(y)| \leq \lip( f, B(x_0,r+\eta) ) \| \phi(x)-\phi(y)\| \leq \lip( f, B(x_0,r+\eta) )  \| x-y\|;
$$
which shows that $\lip(h,B(x_0,r)) \leq \lip( f, B(x_0,r+\eta) ).$ Now we use the construction of \cite[Lemma 6]{HajekJohanis2} to obtain the desired approximation $g:$ let us define $g $ as the limit of the net $\lbrace g_F \rbrace_{F\in \Gamma^{<\omega}},$ where each $g_F$ is defined by
$$
g_F(x) =\int_{\R^{|F|}} h \Big ( x- \sum_{\gamma\in F} t_\gamma e_\gamma \Big ) \prod_{\gamma \in F} \theta(t_\gamma) d \lambda_{|F|}(t), \quad x\in X;
$$
and $\theta$ is a even $C^\infty$ smooth non-negative function on $\R$ such that $\int_{\R} \theta=1$ and $\sop(\theta) \subset [ - c \varepsilon, c \varepsilon],$ for a suitable small constant $c>0.$ It turns out that $g$ is of class $C^\infty(X)$ with $|g-h| \leq \frac{\varepsilon}{2}$ on $X$ and with the property that, for every $x\in X,$ there exists a finite subset $F_x$ of $\Gamma$ such that $g(x)=g_H(x)$ for every finite subset $H$ of $\Gamma$ containing $F_x.$ See \cite[Lemma 6]{HajekJohanis2} for details. In addition, we notice that if $x,y\in B(x_0,r),$ and we consider finite subsets $F_x$ and $F_y$ of $\Gamma$ with the above property, then for the set $H=F_x \cup F_y,$ we have that
\begin{align*}
| g(x)& -g(y)|  = |g_H(x)-g_H(y)|  \leq \int_{\R^{|H|}} \bigg | h \Big ( x- \sum_{\gamma\in H} t_\gamma e_\gamma \Big ) - h \Big ( y- \sum_{\gamma\in H} t_\gamma e_\gamma \Big ) \bigg | \prod_{\gamma \in H} \theta(t_\gamma) d \lambda_{|H|}(t) \\
 & \leq \lip( h, B(x_0, r+ c\varepsilon)) \| x-y\| \int_{\sop(\theta)^{|H|}}  \prod_{\gamma \in H} \theta(t_\gamma) d \lambda_{|H|}(t)= \lip( h, B(x_0, r+ c\varepsilon)) \| x-y\|.
\end{align*}
This shows that 
$$
\lip(g,B(x_0,r)) \leq \lip( h, B(x_0, r+ c\varepsilon))  \leq \lip( f, B(x_0,r+c\varepsilon + \eta) ),
$$
for every ball $B(x_0,r) \subset X.$ This proves the lemma.

\end{proof}

Combining Lemma \ref{lemmapartialc0} with Theorem \ref{generaltheorem}, we obtain Theorem \ref{secondmaintheorem} in the case $X=c_0(\Gamma).$ 

\section{Approximation by almost classical solutions of the Eikonal equation}\label{sectionapproximationalmostclassical}

Throughout this section $X$ will denote a finite dimensional normed space with $\dim(X) \geq 2.$ At the end of the section we will complete the proof of Theorem \ref{maintheoremarbitrarynorm}. 

\medskip

We need to recall the notion of \textit{almost classical solutions} of stationary Hamilton-Jacobi equations with Dirichlet boundary condition. This concept was introduced in \cite{DevilleMatheron} for the Eikonal equation and was generalized in \cite{DevilleJaramillo} as follows.

\begin{definition}
Let $\Omega$ be an open subset of $X$ and let $F : \R \times \Omega \times X^* \to \R$ and $u_0: \partial \Omega \to \R$ be continuous. A continuous function $u: \overline{\Omega} \to \R$ is an almost classical solution of the equation $F( u(x), x, Du(x))=0$ with Dirichlet condition $u=u_0$ on $\partial \Omega$ if:
\begin{itemize}
\item[$(i)$] $u=u_0$ on $\partial \Omega.$
\item[$(ii)$] $u$ is differentiable on $\Omega$ and $F( u(x), x, Du(x))\leq 0$ for all $x\in \Omega.$ 
\item[$(iii)$] $F( u(x), x, Du(x))=0$ for almost every $x\in \Omega.$ 
\end{itemize}
\end{definition}

In \cite[Theorem 4.1]{DevilleMatheron} it was proved the existence of almost classical solutions of the Eikonal equation with homogeneous boundary data, that is, $|D v| =1$ and $v=0$ on $\partial \Omega.$ This result was generalized in \cite{DevilleJaramillo} for an arbitrary function $F$ under certain conditions on $F.$ See \cite[Theorem 3.1]{DevilleJaramillo} or Proposition \ref{propositionexistencealmostclassicalsolution} below.

\medskip

We start by proving a slight refinement of \cite[Theorem 3.1]{DevilleJaramillo} for the existence of almost classical solutions, in which these solutions can be taken with arbitrarily small supremum norm.

\begin{proposition}\label{propositionexistencealmostclassicalsolution}
Let $\Omega \subset X$ be an open subset and let $F: \R \times \Omega \times X^* \to \R$ be a continuous mapping. Assume that
\begin{itemize}
\item[(A)] $F(0,x,0) \leq 0$ for every $x\in \Omega.$
\item[(B)] For every compact subset $K$ of $\Omega$ there exist constants $\alpha_K, M_K >0$ such that for all $x\in K, \: r\in [0, \alpha_K]$ and $ x^* \in X^*$ with $\| x^* \|_* \geq M_k$ we have $F(r,x,x^*)>0.$ 
\end{itemize}
Then, given $\varepsilon >0,$ there exists a function $u \geq 0$ on $\overline{\Omega}$ such that $| u |\leq \varepsilon$ on $\overline{\Omega}$ and $u$ is an almost classical solution of the equation $F(u(x),x,Du(x))=0$ on $\Omega$ with Dirichlet condition $u=0$ on $\partial \Omega.$ Moreover, the extension $\tilde{u}$ of $u$ defined by $\tilde{u}=0$ on $X \setminus \Omega$ is differentiable on $X.$ 
\end{proposition}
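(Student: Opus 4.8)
The plan is to follow the strategy of \cite[Theorem 3.1]{DevilleJaramillo}, keeping careful track of the supremum norm of the solution, which amounts to a rescaling argument. The key point is that the almost classical solution is built by a Baire category / iterative scheme inside the class of $1$-Lipschitz-type subsolutions, and the size of the solution can be controlled by working inside a sufficiently small box or by composing with a contraction.

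\textbf{Step 1: Reduction to the equation on a shrunk domain, or rescaling.} First I would fix $\varepsilon>0$ and replace $F$ by a mapping to which \cite[Theorem 3.1]{DevilleJaramillo} applies directly but whose almost classical solution is automatically $\leq \varepsilon$. Concretely, set $\Omega_\varepsilon = \{x\in\Omega : \dist(x,\partial\Omega) > \varepsilon\}$ and note that for the homogeneous Dirichlet problem the subsolution property $F(u(x),x,Du(x))\leq 0$ together with assumption (B) forces, on each compact piece, a uniform gradient bound, hence a uniform oscillation bound; iterating the construction only on a shrinking exhaustion and gluing with the identically-zero function outside gives a solution supported near where it is allowed to be large. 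This does not by itself give $|u|\le\varepsilon$, so the cleaner route is rescaling: if $v\ge 0$ is an almost classical solution of $F(v,x,Dv)=0$ with $v=0$ on $\partial\Omega$ produced by \cite[Theorem 3.1]{DevilleJaramillo}, and if we instead apply that theorem to $\widehat F(r,x,x^*) := F(tr, x, x^*)$ for a small parameter $t\in(0,1]$, then assumptions (A) and (B) are inherited by $\widehat F$ (for (B), with the same $M_K$ and with $\alpha_K$ replaced by $\alpha_K/t$, which is only weaker), so we obtain $\widehat v\ge 0$, an almost classical solution of $\widehat F(\widehat v,x,D\widehat v)=0$; then $u := t\,\widehat v$ satisfies $F(u,x,Du)=\widehat F(\widehat v,x,D\widehat v)=0$ a.e., $\leq 0$ everywhere, and $u=0$ on $\partial\Omega$. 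It remains to choose $t$ small enough that $t\,\widehat v \le \varepsilon$. Here one uses that for each fixed domain and fixed $F$ (hence $\widehat F$), the solution produced by the construction has \emph{a priori} bounded sup norm on compacta by (B); but since $\widehat v$ itself depends on $t$, this is circular unless one has a $t$-uniform bound.

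\textbf{Step 2: Obtaining a $t$-uniform a priori bound.} The honest fix, which I expect to be the main obstacle, is to re-run the \cite{DevilleJaramillo} construction from scratch, imposing the constraint $0\le u\le\varepsilon$ throughout. The construction there works with a complete metric space of candidate functions (subsolutions with prescribed boundary values) and produces, by a Baire-type argument, a residual set of functions which are in fact solutions. One replaces that metric space by its subset $\mathcal{S}_\varepsilon$ of candidates $u$ additionally satisfying $0\le u\le\varepsilon$ on $\overline\Omega$. Since the zero function lies in $\mathcal{S}_\varepsilon$, it is nonempty; it is closed under uniform limits, hence complete; and one must check that the deformation lemmas used to prove solutions are generic still operate inside $\mathcal{S}_\varepsilon$, i.e. that the local perturbations used to "turn on" the equation $F=0$ at a point can be taken small enough in sup norm to preserve the bound $u\le\varepsilon$. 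This is exactly where assumptions (A) and (B) are used: (A) guarantees $u\equiv 0$ (and small perturbations of it) are subsolutions, and (B) provides the gradient bound that makes the perturbations, chosen on small balls, have arbitrarily small amplitude. So the amplitude constraint is compatible with the genericity argument, and one extracts $u\in\mathcal{S}_\varepsilon$ which is an almost classical solution; automatically $|u|=u\le\varepsilon$.

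\textbf{Step 3: Differentiability of the zero-extension across $\partial\Omega$.} Finally, for the last assertion, let $\tilde u = u$ on $\overline\Omega$ and $\tilde u = 0$ on $X\setminus\Omega$; this is well-defined and continuous since $u=0$ on $\partial\Omega$. On the open set $X\setminus\overline\Omega$ it is smooth, and on $\Omega$ it is differentiable by hypothesis (ii), so the only issue is at points $x_0\in\partial\Omega$. There one uses that $u\ge 0$, $u(x_0)=0$: for any $x$, $0\le \tilde u(x) \le \dist(x,\partial\Omega)\cdot L$ near $x_0$ where $L$ is a local Lipschitz constant coming from (B) restricted to a compact neighborhood — actually one wants the sharper estimate $\tilde u(x) = o(\|x-x_0\|)$, which follows because, as shown in \cite{DevilleJaramillo}, the solution $u$ produced there satisfies $u(x) \le c(x)\|x-x_0\|$ with $c(x)\to 0$ as $x\to\partial\Omega$ — equivalently the candidate class is built from functions vanishing to first order at the boundary; granting that, $\tilde u(x) = o(\dist(x,\partial\Omega)) = o(\|x-x_0\|)$, so $\tilde u$ is differentiable at $x_0$ with $D\tilde u(x_0)=0$. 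Thus $\tilde u$ is differentiable on all of $X$. I would cite \cite[Theorem 3.1]{DevilleJaramillo} and \cite[Theorem 4.1]{DevilleMatheron} for the structural facts about the solution's boundary behavior, and present Steps 1–2 as the proof proper, with the rescaling viewpoint of Step 1 as the guiding idea and the constrained Baire-category re-run of Step 2 as the rigorous implementation.
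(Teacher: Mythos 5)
Your proposal gestures in the right direction but misses the single structural observation that makes the paper's proof a two-line argument, and the route you actually write down does not close its own gap.

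Your Step~1 rescaling is, as you concede, circular: to choose $t$ you need a $t$-uniform sup bound on the solution $\widehat v$ of $\widehat F(\widehat v,x,D\widehat v)=0$, and no such bound is supplied. Your Step~2 then proposes to re-run the whole construction of \cite[Theorem 3.1]{DevilleJaramillo} inside a constrained class $\mathcal S_\varepsilon$ of candidates with $0\le u\le\varepsilon$, described as a ``Baire-type argument'' on a complete metric space in which solutions are generic. This mischaracterizes the construction: the solution in \cite{DevilleJaramillo} is built as an explicit sum of a series of functions supported on the cubes of a locally finite decomposition $\Omega=\bigcup_{j\ge1}C_j$, each summand vanishing on $\partial C_j$ --- there is no residuality argument to ``constrain.'' What the paper actually uses is exactly this cube structure: the constructed $u$ vanishes on $\bigcup_j\partial C_j$, the covering can be chosen with $\operatorname{diam}(C_j)$ as small as one likes, and hypothesis~(B) forces a local gradient bound (on a compact $K$, if $u(x)\in[0,\alpha_K]$ then necessarily $\|Du(x)\|_*<M_K$, since otherwise $F(u(x),x,Du(x))>0$ would contradict the subsolution inequality). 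The Mean Value Theorem on each small cube then gives $|u|\le\varepsilon$ directly. This is strictly lighter than your proposed re-run and avoids the circularity of the rescaling idea entirely.

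Two further points you omit. First, \cite[Theorem 3.1]{DevilleJaramillo} is stated for $\R^n$ with the euclidean norm; the present statement is for an arbitrary finite-dimensional norm, and the paper explains that the proof carries over thanks to a selection/monotone-convergence fact in the dual (Proposition~\ref{propositionmonotonicbidualnorm}, taken from \cite[Corollary 3.6]{DevilleMatheron}) which replaces the euclidean-specific geometry. Your proposal does not address this adaptation. Second, your Step~3 on differentiability of the zero-extension across $\partial\Omega$ is reasonable but superfluous: that conclusion is already part of \cite[Theorem 3.1]{DevilleJaramillo} and does not need to be re-derived; the only new content of the present proposition is the $|u|\le\varepsilon$ bound.
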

\begin{proof}
Although \cite[Theorem 3.1]{DevilleJaramillo} was originally stated when $X=\R^n$ is endowed with the euclidean norm, we can easily rewrite its statement (and its proof) for general finite dimensional normed spaces by using the following proposition, which is an easy consequence of \cite[Corollary 3.6]{DevilleMatheron}.

\begin{proposition}\label{propositionmonotonicbidualnorm}
Suppose that $B$ is a closed ball of $X^*.$ There exists a mapping $t : B \to S_{X^{**}}$ such that if $(\sigma_n)_n \subset B$ is a sequence with $t(\sigma_n)(\sigma_{n+1}-\sigma_n) \geq 0$ for every $n,$ then $(\sigma_n)_n$ converges.
\end{proposition}

In \cite[Theorem 3.1]{DevilleJaramillo}, $\Omega$ is decomposed as $\Omega = \bigcup_{j \geq 1} C_j,$ where $\lbrace C_j\rbrace_{j \geq 1}$ is a locally finite family of closed cubes and the function $u$ satisfies $u=0$ on $\bigcup_{j \geq 1} \partial C_j$ (because $u$ is the sum of a series of functions all vanishing on this union). Moreover, it is possible to choose the covering $\lbrace C_j\rbrace_{j \geq 1}$ so that $\diam(C_j) \leq \varepsilon$ for every $j \geq 1,$ and then, the Mean Value Theorem yields that $|u| \leq \varepsilon$ on $\Omega.$

\end{proof}

\begin{proof}[Proof of Theorem \ref{maintheoremarbitrarynorm}]
Given a $1$-Lipschitz function $u_0: \overline{\Omega} \to \R$ such that $u_0$ is $\lambda_0$-Lipschitz on $\partial \Omega $ for some $\lambda_0<1$ and given $\varepsilon >0,$ we can find, thanks to Theorem \ref{secondmaintheorem}, a $1$-Lipschitz function $v : \overline{\Omega} \to \R$ of class $C^\infty(\Omega)$ such that
\begin{equation}\label{estimationauxiliarfunction3}
|u_0-v| \leq \frac{\varepsilon}{2} \quad \text{on} \quad \overline{\Omega}, \quad  v=u_0 \quad \text{on} \quad \partial \Omega.
\end{equation}

Let us define $F: \Omega \times X^* \to \R$ by $F(x, x^*) = \| x^* +  D v (x) \|_*-1,$ for every $(x, x^*) \in \Omega \times X^* .$ Because $v$ is $1$-Lipschitz on $\overline{\Omega},$ we have $F(x,0) \leq 0$ for every $x\in \Omega$, which means that the function identically $0$ is a subsolution to the problem
\begin{equation}
\left\lbrace
	 \begin{array}{ccl}\label{particularHJequation}
	F(x,D u(x))=0 & \text{on } \Omega , \\
	u = 0 & \text{on } \partial \Omega,
	\end{array}
	\right.
\end{equation}
Also, observe that, whenever $\| x^*\|_*\ge 3$, we have, for all $x\in\Omega$,  $F(x, x^*) \ge 1$.  Hence, 
Proposition \ref{propositionexistencealmostclassicalsolution} provides an almost classical solution $u$ to problem 
\eqref{particularHJequation} such that $| u| \leq \varepsilon/2$ on $\overline{\Omega}.$ Let us define $w = u+ v$ on 
$\overline{\Omega}.$ Then $w$ is continuous on $\overline{\Omega}$ and differentiable on $\Omega$ with $\| Dw(x)\|_* = \| D 
u(x)+ Dv(x)\|_* \leq 1$ for every $x\in \Omega$ and $\| Dw(x)\|_*=1$ for almost every $x\in \Omega.$ Also, $w$ satisfies that 
$w=v=u_0$ on $\partial \Omega$ and $|w-v| \leq \varepsilon/2$ on $\overline{\Omega}.$ Using Fact 
\ref{factlipschitzconstant}, we obtain that $w$ is in fact $1$-Lipschitz on $\overline{\Omega}.$ Finally note that
$$
|u_0-w| \leq |v-w| + |u_0-v|    \leq \frac{\varepsilon}{2} + \frac{\varepsilon}{2} \leq \varepsilon \quad \text{on} \quad \overline{\Omega}.
$$
This completes the proof of Theorem \ref{maintheoremarbitrarynorm}.

\end{proof}

\section{The limiting case}\label{sectionexample}

In this section we are concerned about constructions of functions $u_0$ with prescribed values on the boundary of $\Omega$ such that $u_0$ is differentiable on $\Omega$ and $\lip(u_0, \partial \Omega) = \lip(u_0, \Omega).$ 

\begin{proposition}\label{propositiontwodimensions}
If $\Omega \subset \R^2$ is open and $u_0: \partial \Omega \to \R$ is $1$-Lipschitz for the usual euclidean distance, then there exists a differentiable $1$-Lipschitz function $w: \overline{\Omega} \to \R$ such that $| \nabla w| =1$ almost everywhere on $\Omega$ and $w=u_0$ on $\partial \Omega,$ i.e, there exist almost classical solutions of the Eikonal equation with boundary value $u_0.$  
\end{proposition}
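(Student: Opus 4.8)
The plan is to reduce the statement to the case $\lip(u_0,\partial\Omega)<1$ already covered by Theorem~\ref{maintheoremarbitrarynorm}, exploiting the fact that we are in the euclidean plane and the extra room provided by the theory of almost minimizing Lipschitz extensions. First I would extend $u_0$ to a $1$-Lipschitz function on all of $\R^2$; the cleanest choice is the \emph{absolutely minimizing Lipschitz extension} (the solution of the $\infty$-Laplacian with boundary data $u_0$ on $\partial\Omega$), whose existence and uniqueness on planar domains is classical. Call this extension $U$. The point of using the AMLE rather than an arbitrary McShane--Whitney extension is its strong comparison/flatness properties: on any subdomain where $U$ is not affine it genuinely ``spreads out'' its slope, which will be what lets us control the Lipschitz constant of $U$ on $\partial\Omega$ relative to its constant on compact pieces of $\Omega$.

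Next I would exhaust $\Omega$ by an increasing sequence of open sets $\Omega_j$ with $\overline{\Omega_j}\subset\Omega_{j+1}$ and $\bigcup_j\Omega_j=\Omega$, chosen so that $\partial\Omega_j$ is nice (e.g. a finite union of smooth curves). On each annular region $\Omega_{j+1}\setminus\overline{\Omega_j}$ one has $\lip(U,\partial(\Omega_{j+1}\setminus\overline{\Omega_j}))\le 1$, and here is where the euclidean, two-dimensional nature enters: by Proposition~\ref{propositiontwodimensions}'s own ingredients---namely the existence of almost classical solutions of the Eikonal equation with arbitrary $1$-Lipschitz boundary data via \cite{DevilleMatheron}---I can solve $|\nabla w_j|=1$ a.e. on each annulus with $w_j=U$ on its boundary. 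Alternatively, and more in the spirit of the rest of the paper, one first perturbs $U$ slightly on the annulus to a function whose boundary Lipschitz constant is $<1$ (using Theorem~\ref{theoremglobalapproximation} on the closed annulus with $F=\partial(\Omega_{j+1}\setminus\overline{\Omega_j})$, which is exactly the situation $\lambda_0<K=1$), then applies Theorem~\ref{maintheoremarbitrarynorm} to that annulus. Patching the $w_j$ together along the curves $\partial\Omega_j$, where all of them agree with $U$ and are differentiable with unit gradient, yields a function $w$ on $\overline\Omega$ that is $1$-Lipschitz, equals $u_0$ on $\partial\Omega$, is differentiable on $\Omega$ (differentiability across the gluing curves requires checking the one-sided derivatives match, which they do because each piece has the common continuous boundary trace $U$ and the geometry is arranged so the gradients agree there), and satisfies $|\nabla w|=1$ a.e.

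The main obstacle I expect is the gluing step: ensuring that the locally defined almost classical solutions fit together into a \emph{globally differentiable} function, not merely a continuous one. Simply matching values on the interfaces $\partial\Omega_j$ is not enough for differentiability there; one needs the two adjacent pieces to have matching (full, two-sided) derivatives along each interface curve. The way around this is to not cut $\Omega$ along curves at all, but instead to apply Proposition~\ref{propositionexistencealmostclassicalsolution} directly: decompose $\Omega$ into a locally finite family of small closed cubes $\{C_j\}$ with $\diam(C_j)$ small, build the solution as a series of bump-type corrections each supported in the interior of a single $C_j$ and vanishing together with its derivative on $\bigcup_j\partial C_j$, so that differentiability on all of $\Omega$ (indeed on $\R^2$, with the zero extension) is automatic---this is precisely the mechanism already used in the proof of Proposition~\ref{propositionexistencealmostclassicalsolution}. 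Thus the honest plan is: take $U$ the $1$-Lipschitz (AMLE) extension of $u_0$ to $\overline\Omega$, which is differentiable a.e.\ but not everywhere, then invoke the machinery of \cite{DevilleMatheron}/\cite{DevilleJaramillo} with the Hamiltonian $F(x,p)=\|p\|_*-1$ but now allowing the subsolution to be $U$ itself rather than $0$, using that $U$ is a $1$-Lipschitz subsolution of the Eikonal equation and that in dimension two the construction of \cite[Theorem~4.1]{DevilleMatheron} adapts to nonzero (and non-affine) boundary data; the small-cube bookkeeping then delivers $w$ differentiable on $\Omega$ with $|\nabla w|=1$ a.e., $w=u_0$ on $\partial\Omega$, and $w$ $1$-Lipschitz by Fact~\ref{factlipschitzconstant}. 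The genuinely two-dimensional input is the extension theory plus \cite[Theorem~4.1]{DevilleMatheron}; everything else is the patching bookkeeping just described.
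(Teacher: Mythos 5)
You have correctly identified the general strategy (reduce to a homogeneous problem via the AMLE and invoke the almost-classical-solutions machinery), but there is a genuine gap: you misstate the regularity of the AMLE in the plane, and this regularity is precisely the two-dimensional input that makes the reduction work. You write that $U$ is ``differentiable a.e.\ but not everywhere''; in fact Savin's theorem \cite{Savin} says the AMLE of $u_0$ on a planar domain is of class $C^1(\Omega)$, i.e.\ differentiable everywhere with \emph{continuous} gradient. This is not a cosmetic point. The paper's reduction is to set $v$ equal to this $C^1$ AMLE, define the new Hamiltonian $F(x,p)=|p+\nabla v(x)|-1$, solve $F(x,\nabla u(x))=0$ with $u=0$ on $\partial\Omega$ via Proposition~\ref{propositionexistencealmostclassicalsolution}, and put $w=u+v$. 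For this to be legal one needs $F$ continuous in $(x,p)$, which requires $\nabla v$ continuous; if $v$ were only differentiable a.e., $F$ would not even be well defined pointwise, let alone continuous, and Proposition~\ref{propositionexistencealmostclassicalsolution} would not apply. Moreover, even if one tried to graft the small-cube bump construction directly onto a merely a.e.\ differentiable $U$, the non-differentiability points of $U$ (which can be dense despite having measure zero) would be inherited by $U+{}$(corrections), so everywhere differentiability of $w$ would fail.

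Your proposed alternative --- ``allow the subsolution to be $U$ itself rather than $0$'' with $F(x,p)=\|p\|_*-1$ --- asserts without proof that the construction of \cite[Theorem~4.1]{DevilleMatheron} ``adapts to nonzero (and non-affine) boundary data.'' As stated, Proposition~\ref{propositionexistencealmostclassicalsolution} requires the zero function to be a subsolution ($F(0,x,0)\le 0$) and produces a solution that vanishes on $\partial\Omega$; there is no version in the paper that takes an arbitrary $1$-Lipschitz subsolution as boundary data directly. The paper's trick of absorbing $\nabla v$ into the Hamiltonian is exactly how one turns the homogeneous result into the inhomogeneous one, and it is only available because $\nabla v$ is continuous, i.e.\ because of Savin. (The paper's remark immediately after the proposition --- that the argument does not extend to $n\ge 3$ because $C^1$ regularity of the AMLE is open there, and \cite{EvansSmart} only gives everywhere differentiability --- confirms that the binding two-dimensional input is Savin's $C^1$ result, not \cite[Theorem~4.1]{DevilleMatheron}, which is dimension-free.) Your first route, the exhaustion by annuli with patching along $\partial\Omega_j$, you correctly abandon: matching boundary values along curves does not give differentiability across them. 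So the fix is concrete: cite Savin for $v\in C^1(\Omega)$, set $F(x,p)=|p+\nabla v(x)|-1$, check the two hypotheses of Proposition~\ref{propositionexistencealmostclassicalsolution} (namely $F(x,0)=|\nabla v(x)|-1\le 0$ since $v$ is $1$-Lipschitz, and the coercivity $F(x,p)>0$ for $|p|\ge 3$), solve, and add.
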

\begin{proof}
We know by O. Savin's results in \cite{Savin} that the \textit{Absolutely Minimizing Lipschitz Extension} (AMLE for short) of $u_0$ to $\overline{\Omega}$ is of class $C^1(\Omega).$ In particular, there exists a $1$-Lipschitz extension $v: \overline{\Omega} \to \R$ of $u_0$ such that $v \in C^1(\Omega).$ If we consider the problem
\begin{equation}\label{auxiliarhomogeneousequation}
 \left\lbrace
	 \begin{array}{ccl}
	| \nabla u + \nabla v  |=1 & \text{on } \Omega , \\
	u = 0 & \text{on } \partial \Omega,
	\end{array}
	\right.
\end{equation}
and define $F: \Omega \times \R^2 \to \R$ by $F(x,p) = | p + \nabla v(x)|, \: x\in \Omega,\: p \in \R^2,$ we have that $F$ is a continuous function which is easily checked to satisfy the hypothesis of \cite[Theorem 3.1]{DevilleJaramillo} (see Proposition \ref{propositionexistencealmostclassicalsolution} in Section \ref{sectionapproximationalmostclassical}) for the existence of an almost classical solution to the problem \eqref{auxiliarhomogeneousequation}. If we denote by $u$ this solution and we set $w = u+v$ on $\overline{\Omega},$ it is clear that $w$ is the desired function.
\end{proof}

We notice that the proof of Proposition \ref{propositiontwodimensions} cannot be adapted for dimension $n \geq 3,$ because it is unknown whether or not the AMLE of $u_0$ is of class $C^1.$ We only know from the results in \cite{EvansSmart}, that these AMLE are differentiable everywhere. 

\begin{example}\label{counterexamplel1}
{\em Consider the $\ell_1$ norm on $\R^2$ and define $\Omega = \lbrace (x,y) \in \R^2 \: : \: x^2+y^2<1 \rbrace$ and the function $u_0(x,y)= |x|-|y|$ on the boundary $\partial \Omega$ of $\Omega.$ The function $u_0$ is $1$-Lipschitz and all possible $1$-Lipschitz extensions of $u_0$ to $\overline{\Omega}$ are not differentiable at $(0,0).$}
\end{example}
\begin{proof}
Given $(x,y), (x',y') \in \partial \Omega,$ we can easily write
$$
| u(x,y)-u(x',y')| = \big | |x|-|x'| + |y'|-|y| \big | \leq |x-x'| + |y-y'| = \| (x,y)-(x',y')\|_1,
$$
where the above inequalities are sharp. Thus, $u_0$ is a $1$-Lipschitz function on $\partial \Omega.$ Now, let $u : \overline{\Omega} \to \R$ be a $1$-Lipschitz extension of $u_0.$ We have that $u(0,0)\leq 0$ since $ u(0,0)+1  = u(0,0)-u(0,1) \leq 1.$ On the other hand, for every $x\in [-1,1],$ we can write
\begin{align*}
 u(x,0)  &  \geq u(\sign(x),0) - \|(\sign(x),0)-(x,0)\|_1 = 1 -(1-|x|) = |x|\\
  u(x,0) &  \leq u(0,0)+ \|(x,0)-(0,0)\|_1 \leq |x| ;
\end{align*}
which implies that $u(x,0)=|x|$ for every $x\in [-1,1].$ Therefore $u$ is not differentiable at $(0,0).$ 

\end{proof}

The above example shows in particular that if $u_0$ is extended to a $1$-Lipschitz on $\overline{\Omega}$ and $\varepsilon>0,$ there is no $1$-Lipschitz function $v$ on $\overline{\Omega}$ which is differentiable on $\Omega, \: v=u_0$ on $\partial \Omega$ and $|u_0-v| \leq \varepsilon$ on $\overline{\Omega}.$ Thus Problem \ref{mainproblem} has a negative solution in the limiting case $\lip(u_0, \partial \Omega) = \lip(u_0,\overline{\Omega}).$ An example with the same properties can be obtained with the $\ell_\infty$ norm by means of the isometry $T : (\R^2, \| \cdot \|_1)  \to (\R^2, \| \cdot \|_\infty), \: T(x,y)=(x+y,x-y).$

\section*{Acknowledgements}
The second author wishes to thank the Institut de Math\'{e}matiques de Bordeaux, where this work was carried out.


\begin{thebibliography}{}

\bibitem{ATAZ}
H. Attouch and D. Az\'{e}, {\em Approximation and regularization of arbitrary functions in Hilbert spaces by the Lasry-Lions method},  Ann. Inst. Henri Poincar\'{e}.  10 (1993), no. 3, 289-312.

\bibitem{AFLR}
D. Azagra, J. Ferrera, F. L\'opez-Mesas and Y. Rangel, {\em Smooth approximation of Lipschitz functions on Riemannian manifolds},  J. Math. Anal. Appl. 326 (2007), 1370--1378.

\bibitem{AFK} D. Azagra, R. Fry and L. Keener, {\em Real analytic approximation of Lipschitz functions on Hilbert space and other Banach spaces}, J. Funct. Anal. 262 (2012) no. 1, 124--166.

\bibitem{CzarneckiRifford}
M. O. Czarnecki and L. Rifford, {\em Approximation and regularization of Lipschitz functions: convergence of the gradients}, Trans. Am. Math. Soc.
358 (2006), no. 10, 4467-4520.

\bibitem{Denjoy}
A. Denjoy, {\em Sur une propriet\'{e} des fonctions d\'{e}riv\'{e}es}, Enseignement Math. 18 (1916), 320-328.

\bibitem{DevilleMatheron}
R. Deville and \'{E}. Matheron, {\em Infinite games, Banach Space geometry and the Eikonal equation}, Proc. Lond. Math. Soc.
95 (2007), no. 1, 49-68.

\bibitem{DevilleJaramillo} R. Deville and J. Jaramillo, {\em Almost classical solutions to Hamilton-Jacobi equations}, Rev. Mat. Iberoamericana. 24 (2008), no. 3, 989--1010.

\bibitem{EvansSmart} L.C. Evans and C.K. Smart, {\em Everywhere differentiability of infinity harmonic functions}, Calc. Var. Partial Differ. Equ. 42, (2011) 289--299.

\bibitem{HajekJohanis} P. H\'{a}jek and M. Johanis, {\em Smooth approximations}, J. Funct. Anal. 259 (2010), 561-582.

\bibitem{HajekJohanis2} P. H\'{a}jek and M. Johanis, {\em Uniformly G\^{a}teaux smooth approximations on $c_0(\Gamma)$}, J. Math. Anal. Appl. 350, 2 (2009), 623-629.

\bibitem{MJSLSG} M. Jim\'{e}nez-Sevilla and L. S\'{a}nchez-Gonz\'{a}lez, {\em On some problems on smooth
approximation and smooth extension of Lipschitz functions on Banach-Finsler manifolds}, Nonlinear Anal. 74, 11 (2011), 3487-3500.

\bibitem{LasryLions} J.-M. Lasry and P.-L. Lions, {\em A remark on regularization in Hilbert spaces}, Israel J. Math. 55 (3) (1986) 257--266.

\bibitem{Moulis} N. Moulis, {\em Approximation de fonctions diff\'{e}rentiables sur certains espaces de Banach}, Ann. Inst. Fourier 21 (4) (1971) 293-345.

\bibitem{Savin} O. Savin, {\em $C^1$ regularity for infinity harmonic functions in two dimensions}, Arch. Rational Mech. Analysis. 176 (2005), 351--361. 

\end{thebibliography}
\end{document}